\newtheorem{thm}{Theorem}[section]
\newtheorem{cor}[thm]{Corollary}
\theoremstyle{definition}
\theoremstyle{remark}
\newtheorem{conj}[thm]{Conjecture}
\numberwithin{equation}{section}
\newcommand{\R}{\mathbb R}
\newcommand{\N}{\mathbb N}
\newcommand{\ve}{\varepsilon}
\newcommand{\lam}{\lambda}
\newcommand{\cde}{\stackrel{*}{\rightharpoonup}}
\newcommand{\HH}{\mathcal{H}^{n-1}}
\def\diver{\mathop{\text{\normalfont div}}}
\title[Homogenization of Steklov eigenvalues]{Homogenization of Steklov eigenvalues with rapidly oscillating weights}
\author{Ariel M.  Salort}%
\address{Departamento de Matem\'atica, FCEyN - Universidad de Buenos Aires and
\hfill\break \indent IMAS - CONICET
\hfill\break \indent Ciudad Universitaria, Pabell\'on I (1428) Av. Cantilo s/n. \hfill\break \indent Buenos Aires, Argentina.}
\email[A.M. Salort]{asalort@dm.uba.ar}
\urladdr{http://mate.dm.uba.ar/~asalort}
\begin{document}

\subjclass[2010]{35B27, 42B20 ,35J92}

\keywords{Homogenization, Eigenvalues, oscillating integrals}

\begin{abstract}
 In this article we study the homogenization rates of eigenvalues of a Steklov problem with rapidly oscillating periodic weight functions. The results are obtained via a careful study of oscillating functions on the boundary and a precise estimate of the $L^\infty$ bound of eigenfunctions. As an application we provide some estimates on the first nontrivial curve of the Dancer-{F}u{\v{c}}{\'{\i}}k spectrum.
\end{abstract}

\maketitle
 
\section{Introduction}
Homogenization of elliptic operators with rapidly oscillating coefficients  has been and continues to be a very active research area due to the wide applications. We refer to \cite{A,CZ,JKO,k1,k2,MV,  OSY,SV} for an introduction and a comprehensive develop of this theory.

In particular, homogenization of eigenvalue problems involving rapidly oscillating quasilinear operators and/or rapidly oscillating weight functions is a field which has received great attention in the last decades. Denoting by $\lam_{k,\ve}$ the $k-$th (variational) eigenvalue corresponding to a  rapidly oscillating equation, and $\lam_{k,0}$ the $k-$th (variational) eigenvalue of the corresponding limit problem as $\ve \to 0$, $k\in \N$, the labor of obtaining estimates of the difference $|\lam_{k,\ve}-\lam_{k,0}|$ in terms of $k$ and $\ve$ was addressed by several authors: the order of convergence of eigenvalues in the Dirichlet/Neumann case was treated by Kesavan \cite{k1,k2}, Osborn \cite{Os}, Oleinik \emph{et al} \cite{OSY}, Vogelius \emph{et al} \cite{MV, SV}, Castro and Zuazua \cite{CZ}, Kenig, Lin and Shen \cite{KLS}, Fern\'andez Bonder \emph{et al} \cite{FBPS1, FBPS2, FBPS3, FBPS4}, among others. See also \cite{S1,S2}.

Nevertheless, to the best of our knowledge, no investigation were performed on convergence rates of eigenvalues involving quasilinear problems with Steklov boundary condition and rapidly oscillating weight functions. Inspired on \cite{ASS, CK}, that is the main scope of this manuscript.

Given $\Omega\subset \R^n$, $n\geq 3$  and an open convex bounded domain with Lipschitz boundary we consider the Steklov eigenvalue problem for the $p-$Laplacian
\begin{align} \label{ec.stek.intro} \tag{$P_\rho$}
\begin{cases}
-\Delta_p u + |u|^{p-2}u =0 &\text{ in }\Omega,\\
|\nabla u|^{p-2} \frac{\partial u}{\partial \nu}=   \lambda \rho |u|^{p-2}u &\text{ on } \partial\Omega,
\end{cases}
\end{align}
where $\frac{\partial u}{\partial \nu}$ is the outer normal derivative, $\Delta_p:=\diver(|\nabla u|^{p-2}\nabla u)$, $p>1$,  and $\rho\colon \partial\Omega \to \R$ is a function such that, for some fixed constants $\rho_\pm$ fulfills
\begin{equation} \label{cond.rho}
0<\rho_- \leq \rho(x) \leq \rho_+<\infty, \qquad x\in \partial\Omega.
\end{equation}

Given a sequence of functions $\{\rho_\ve\}_{\ve>0}$  satisfying \eqref{cond.rho}, we  want to study the convergence of eigenvalues of $(P_{\rho_\ve})$ as $\ve\to 0$ to the limit problem $(P_{\rho_0})$, where these two problems are defined, respectively as
\begin{align}  \label{problems}
\begin{cases}
-\Delta_p u_\ve + |u_\ve|^{p-2}u_\ve =0 &\text{ in }\Omega,\\
|\nabla u_\ve|^{p-2} \frac{\partial u_\ve}{\partial \nu}=   \lambda \rho_\ve |u_\ve|^{p-2}u_\ve &\text{ on } \partial\Omega,
\end{cases}
\quad 
\begin{cases}
-\Delta_p u_0 + |u_0|^{p-2}u_0 =0 &\text{ in }\Omega,\\
|\nabla u_0|^{p-2} \frac{\partial u_0}{\partial \nu}=   \lambda \rho_0 |u_0|^{p-2}u_0 &\text{ on } \partial\Omega.
\end{cases}
\end{align}

Under the aforementioned assumptions of  $\{\rho_\ve\}_{\ve>0}$  it is well-known that sequences of eigenpairs $\{(u_\ve, \lam_\ve)\}_{\ve>0}$ solving $(P_{\rho_\ve})$ converge as $\ve\to 0$ to eigenpairs $(u_0,\lam_0)$ to $(P_{\rho_0})$, where $\rho_0$ is the weak* limit in $L^\infty(\partial \Omega)$ of $\rho_\ve$. See for instance \cite{ASS, CK}. 

The main goal of this paper is to study the behavior of the (variational) eigenvalues to $(P_{\rho_\ve})$ as  $\ve\to 0$. When no periodicity assumptions on the family $\{\rho_\ve\}_{\ve>0}$ are made, in Theorem \ref{main} we prove that
$$
\lim_{k\to 0} \lam_{k,\ve} = \lam_{k,0}
$$
where $\lam_{k,\ve}$ and $\lam_{k,0}$ denote the $k-$th ($k\in\N$) variational eigenvalue of problems \eqref{problems}, respectively.

In the case of periodic homogenization, i.e.,  $\rho_\ve(x)=\rho(\tfrac{x}{\ve})$ where $\rho$ is a $Q-$periodic function (being $Q$ the unit cube in $\R^n$): $\rho(x+h)=\rho(x)$ for all $x\in \partial\Omega$ and $h\in\mathbb{Z}^n$, it is well-known that $\rho_\ve \cde \rho_0$ weakly* in $L^\infty(\partial\Omega)$ with $\rho_0=\int_{\mathbb{T}^n} \rho(x)\,dx$, being $\mathbb{T}^n$ the unit torus of $\R^n$. In this case we obtain explicit estimates of the  convergence rate for the first two eigenvalues. Namely, in Theorem \ref{main} it is proved that
$$
|\lam_{k,\ve}-\lam_{k,0}|\leq C   \ve^{\frac{p-1}{p}-\tau}, \qquad k=1,2
$$
for any $\tau>0$, where $C$ is a computable positive constant depending only on $n$, $p$, $\tau$, $\rho_\pm$ and $\Omega$.

In the linear case, that is, when $p=2$, we are able to compute the rate  of the convergence of the full sequence of variational eigenvalues. Namely, in Theorem \ref{teo.p2} we prove that for all eigenvalues the following estimate holds
$$
|\lam_{k,\ve}-\lam_{k,0}| \leq  C \ve^{\frac12-\tau}   k^\frac{2}{n-1} k^{2+\frac12 \frac{n}{n-1}}, \qquad k\in \N
$$
for any $\tau>0$, where $C$ is a computable positive constant  depending only on $\tau$, $\Omega$, $n$ and $\rho_\pm$. As stated in Conjecture \ref{conjetura}, we dare to guess that this bound should be improved to be at least $C \ve^\frac12 k^\frac{2}{n-1}$, and we let it as an open question. 

It is worth mentioning that this type of problems are closely related with study of continuity and strong continuity of eigenvalues. See for instance \cite{MZ, WYZ, WZ} and references therein.  

The Steklov boundary condition involved in our results brings on several technical problems with respect with the Dirichlet/Neumann case since  rapidly oscillating integrals on the boundary of the domain naturally appear. Dealing with these  integrals will be the main difficulty to face.  This task is overcame by means of the use of a duality approach via an auxiliary Neumann problem (Theorem \ref{teo.osci}); however, as it will be seen, our technique  requires precise estimates on eigenfunctions, therefore, with that end in Theorem \ref{teo.linfty} and Corollary \ref{cor.acotada} we compute explicit estimates on the $L^\infty$ norm of eigenfunctions to the Steklov problem \eqref{ec.stek.intro} via the Moser iteration argument introduced in \cite{BGT}.

As an application of Theorem \ref{main} we study the convergence of the first non-trivial curve of the Dancer-{F}u{\v{c}}{\'{\i}}k spectrum with Steklov boundary condition. Given two  sequences of function $\{a_\ve\}_{\ve>0}$ and $\{b_\ve\}_{\ve>0}$ satisfying \eqref{cond.rho} such that $a_\ve \cde a_0$, $b_\ve \cde b_0$ weakly* in $L^\infty(\partial\Omega)$ as $\ve\to0$, we consider the asymmetric eigenvalue problem given by
\begin{align} \label{eq.fucik.intro}   
\begin{cases}
-\Delta_p u_\ve + |u_\ve|^{p-2}u_\ve =0 &\text{ in }\Omega,\\
|\nabla u_\ve|^{p-2} \frac{\partial u_\ve}{\partial \nu}=   \alpha a_\ve(x)(u_\ve^+)^{p-1}- \beta b_\ve(x) (u_\ve^-)^{p-1} &\text{ on } \partial\Omega.
\end{cases}
\end{align}
The first non-trivial curve $\mathcal{C}_{a_\ve,b_\ve}$ of \eqref{eq.fucik.intro} is given by the set of $(\alpha,\beta)\in \R^+\times \R^+$ such that the corresponding eigenfunctions change their sign. For each $s\in \R^+$ let $(\alpha_\ve(s),\beta_\ve(s))$ be the intersection between $\mathcal{C}_{a_\ve,b_\ve}$ and line of slope $s$ starting at the origin of $\R^2$. Then, we parameterize the first nontrivial curve of \eqref{eq.fucik.intro} as $\{\alpha_\ve(s),\beta_\ve(s))\colon s>0\}$ and  denote it as $\mathcal{C}_{a_\ve,b_\ve}(s)$. For a fixed $\ve>0$, existence and properties on that curve were studied for instance in \cite{Anane}.
Similarly, denote by $\mathcal{C}_{a_0,b_0}(s)$ the first non-trivial curve corresponding to the limit problem 
\begin{align} \label{eq.fucik.intro.0}   
\begin{cases}
-\Delta_p u_0 + |u_0|^{p-2}u_0 =0 &\text{ in }\Omega,\\
|\nabla u_0|^{p-2} \frac{\partial u_0}{\partial \nu}=   \alpha_0 a_0(x)(u_0^+)^{p-1}- \beta_0 b_0(x) (u_0^-)^{p-1} &\text{ on } \partial\Omega
\end{cases}
\end{align}
obtained as $\ve\to 0$ in \eqref{eq.fucik.intro}. 

In Theorem \ref{teo2} we prove that $\mathcal{C}_{a_\ve,b_\ve}\to \mathcal{C}_{a_0,b_0}$ as $\ve\to 0$ in the sense that $\alpha_\ve(s) \to \alpha_0(s)$ and $\beta_\ve(s)\to \beta_0(s)$ as $\ve \to 0$ for each fixed $s>0$. In the case of periodic homogenization, i.e., when $a\ve(x)=a(\tfrac{x}{\ve})$ and $b_\ve(x)=b(\tfrac{x}{\ve})$ with $a,b$ two $Q-$periodic functions, being $Q$ the unit cube in $\R^n$, we further obtain that
$$
|\alpha_\ve(s)-\alpha_0(s)| \leq C \ve^{\frac{p-1}{p}-\tau} \max\{1,s^{-1}\}, \qquad 
|\beta_\ve(s)-\beta_0(s)| \leq C\ve^{\frac{p-1}{p}-\tau} \max\{1,s  \}.
$$
for each $\tau>0$, where $C$ is a positive constant depending of $n$, $p$, $\rho_\pm$, $\Omega$ and $\tau$.

Finally, we mention that the results stated in Theorems \ref{main} and \ref{teo2} still true when changing the $p-$Laplacian operator with a general quasilinear operator of the form $\diver(|A(x)\nabla u\cdot \nabla u|^\frac{p-2}{2} A(x)\nabla u)$, being $A$ a uniformly elliptic and symmetric matrix.

The article is organized as follows: in Section \ref{sec2} we introduce the notation on Sobolev spaces and functions of bounded variation used throughout the paper, as well as some remarks on the Steklov eigenvalue problem; in Section \ref{sec3} we deal with the study of rapidly oscillating integrals on the boundary; Section \ref{sec4} is devoted to obtain precise $L^\infty$ bounds of Steklov eigenfunction; in Section \ref{sec5} we provide for the proof of our main results; finally in Section \ref{sec6} we introduce some applications to asymmetric eigenvalue problems.

\section{Preliminaries} \label{sec2}
In this section we introduce some definitions and notation used in this paper.

\subsection{Oscillating functions}
Given a sequence of functions $\{\rho_\ve(x)\}_{\ve>0}$ satisfying condition \eqref{cond.rho} we denote by $\rho_0(x)$ its weak* limit in $L^\infty(\partial\Omega)$ as $\ve \to 0$. When we say that $\{\rho_\ve(x)\}_{\ve>0}$ is a $Q-$periodic sequence, being $Q$ the unit cube in $\R^n$,  we mean that $\rho_\ve(x):=\rho(\tfrac{x}{\ve})$ with $\rho(x+h)=\rho(x)$ for all $x\in \partial\Omega$ and $h\in \mathbb{Z}^n$; in this case we have that $\rho_0 \in \R$ and it is given by $\rho_0 = \int_{\mathbb{T}^n} \rho(x)\,dx$, being $\mathbb{T}^n$ the unit torus of $\R^n$.

\subsection{Sobolev spaces}
If $A\subset \R^n$ is an open set and $1\leq p\leq \infty$, we denote by $L^p(A)$ the usual space of $p-$summable functions on $A$ with norm  $\|\cdot \|_{L^p(\Omega)}$.  $W^{1,p}(\Omega)$ stand for the  Sobolev space of functions in $L^p(A)$ whose gradient in the sense of distributions belongs to $L^p(A,\R^n)$, endowed with the norm
$$
\|u\|_{W^{1,p}(\Omega)}^p:= \|u\|_{L^p(\Omega)}^p + \|\nabla u\|_{L^p(\Omega)}^p = \int_\Omega  |\nabla u|^p + |u|^p\,dx.
$$

We recall that if $\Omega\subset \R^n$ is an open bounded domain with Lipschitz boundary, there exists a continuous linear operator $T\colon W^{1,p}(\Omega)\to L^p(\partial\Omega)$ such that $Tu=u|_{\partial\Omega}$ if $\in W^{1,p}(\Omega)\cap C(\bar\Omega)$ and 
$$
\|Tu\|_{L^p(\partial\Omega)} \leq C_{\text{Tr}_p}(\Omega) \|u\|_{W^{1,p}(\Omega)}, \qquad \text{ for each } u\in W^{1,p}(\Omega).
$$

Given $A\subset \R^n$ we denote with $|A|$ its $n-$dimensional Lebesgue measure, and with $|\partial\Omega|$ its $(n-1)-$dimensional Hausdorff measure, which is referred as $\HH$. 

\subsection{Functions of bounded variation} 
If $A\subset \R^n$ is open, we say that $u\in BV(A)$ if $u\in L^1(A)$ and its derivative in the sense of distributions is a finite Radon measure on $A$, i.e., $Du\in \mathcal{M}_b(A;\R^n)$. The space of \emph{functions of bounded variation} on $A$ is denoted $BV(A)$, and it is a Banach space endowed with the norm 
$$
\|u\|_{BV(A)}:= \| u \|_{L^1(A)} + \|Du\|_{\mathcal{M}_b(A;\R^n)}.
$$
The quantity $|Du|(A):=\|Du\|_{\mathcal{M}_b(A;\R^n)}$ is the \emph{total variation} of $u$.

Notice that $W^{1,1}(A)\subset BV(A)$. Moreover, if $u\in W^{1,1}(A)$, then  $\|u\|_{BV(A)}=\|u\|_{W^{1,1}(A)}$.

We recall the Sobolev embedding in this setting: the space $BV(\R^n)$ is continuously embedded in $L^p(\R^n)$ for every $1\leq p\leq \frac{n}{n-1}$. 

If $\Omega\subset\R^n$ is an open bounded domain with Lipschitz boundary, there exists a continuous linear operator $T\colon BV(\Omega)\to L^1(\partial\Omega)$ such that, denoting $T(u)$ on $\partial\Omega$ still by $u$, the following integration by parts holds true for every $\varphi\in C_c^1(\R^n)$
$$
\int_\Omega u \frac{\partial \varphi}{\partial x_i}\,dx = \int_{\partial\Omega} u\varphi \nu_i \,d\HH - \int_\Omega \varphi \,d D_i u,
$$
where $\nu_i$ denotes the $i-$th component of the outer normal $\nu$. We denote $C_{\text{Tr}}(\Omega)$ the norm of $T$. Thanks to the last expression above, $T$ is a lifting to $BV(\Omega)$ of the trace operator on $W^{1,1}(\Omega)$, with the same norm. Moreover, the following result is a consequence of the last expression: if $u\in W^{1,1}(\Omega)$, then we have that $u 1_\Omega \in BV(\R^n)$ with 
$$
\|u 1_\Omega\|_{BV(\R^n)}=\int_\Omega |\nabla u| + |u|\,dx + \int_{\partial\Omega} |u| \,d\HH,
$$
where $1_\Omega(x)=1$ if $x\in \Omega$ and $1_\Omega(x)=0$ otherwise.

\subsection{The Steklov eigenvalue problem}

Given an open bounded domain $\Omega\subset \R^n$ with Lipschitz boundary and $\rho$ satisfying \eqref{cond.rho} we consider the following Steklov eigenvalue problem
\begin{align} \label{ec.stek}
\begin{cases}
-\Delta_p u + |u|^{p-2}u =0 &\text{ in }\Omega,\\
|\nabla u|^{p-2} \frac{\partial u}{\partial \nu}=   \lambda \rho |u|^{p-2}u &\text{ on } \partial\Omega,
\end{cases}
\end{align}
where $\frac{\partial u}{\partial \nu}$ is the outer normal derivative.

We say that $\lam>0$ is an \emph{eigenvalue} of \eqref{ec.stek} with \emph{eigenfunction} $ u\in W^{1,p}(\Omega)\setminus\{0\}$ if the following relation holds
\begin{equation} \label{aut}
\int_\Omega |\nabla u|^{p-2} \nabla u \cdot \nabla \varphi\,dx + \int_\Omega |u|^{p-2}u\varphi\,dx = \lam\int_{\partial\Omega} \rho |u|^{p-2}u\varphi d\HH \qquad \forall \varphi \in W^{1,p}(\Omega).
\end{equation}

Since $\Omega$ has Lipschitz boundary, it is well-know that \eqref{ec.stek} admits a sequence of variational eigenvalues $\{\lam_k(\rho)\}_{k\in\N}$ such that $0<\lam_1 \leq \lam_2 \leq \cdots \nearrow +\infty$
and they can be characterized by means of the following  minimax formula
\begin{equation} \label{minimax}
\lam_k(\rho) = \inf_{C\in \mathcal{C}_k} \sup_{v\in C} \frac{\int_\Omega  |\nabla v|^p + |v|^p\,dx}{\int_{\partial\Omega} \rho |v|^p\, d\HH} 
\end{equation}
where $\mathcal{C}_{k}=\{C\subset W^{1,p}(\Omega) \colon C \text{ compact}, C=-C, \gamma(C)\geq k\}$, where $\gamma(C)$ is the Krasnoselskii genus of $C$. 

In particular, the first and second eigenvalue admit the following characterization (see for instance \cite{FBR1})
\begin{equation} \label{autov.st}
\lam_1(\rho)=\min_{u\in W^{1,p}(\Omega)} \frac{\int_\Omega  |\nabla v|^p + |v|^p\,dx}{\int_{\partial\Omega} \rho |v|^p\, d\HH}, \qquad 
\lam_2(\rho)=\min_{u\in \mathcal{A}(\Omega)} \frac{\int_\Omega  |\nabla v|^p + |v|^p\,dx}{\int_{\partial\Omega} \rho |v|^p\, d\HH}
\end{equation}
where $A=\{u\in W^{1,p}(\Omega)\colon |\partial \Omega^\pm|\geq c \}$, being $\partial\Omega^+=\partial\Omega \cap \{u>0\}$, $\partial\Omega^-=\partial\Omega \cap \{u<0\}$ and $c=\rho_+ \lam_1 C_{\text{Tr}_p}(\Omega)^{p}$.

When $\rho\equiv 1$ we write $\lam_k$ instead of $\lam_k(1)$.  

Observe that in light of \eqref{cond.rho}, $\frac{1}{\rho_+} \lam_k  \leq \lam_k(\rho) \leq \frac{1}{\rho_-} \lam_k$ for every $k\in\N$. By using the isoperimetric inequality, 
\begin{equation*}
n\omega_n^\frac1n |\Omega|^\frac{n-1}{n}\leq  |\partial \Omega|
\end{equation*}
being $\omega_n$ the volume of the unit ball in $\R^n$, and testing with the the function $1$ in the previous characterization we get
\begin{equation} \label{cota.lam1}
\lam_1(\rho)\leq \frac{1}{\rho_-}  \frac{|\Omega|}{|\partial \Omega|} \leq c(n,\rho_-) |\partial\Omega|^\frac{1}{n-1}. 
\end{equation}

When $p=2$ it will be useful the following characterization of eigenvalues
\begin{equation} \label{caracp2}
\lam_{k}(\rho)= \min_{\mathcal{D}_k} \frac{\int_\Omega |\nabla v|^2 + |v|^2\,dx }{\int_{\partial\Omega} \rho |v|^2 \,d\HH}
\end{equation}
where
$$
\mathcal{D}_k = \{ v \in W^{1,2}(\Omega)\colon \int_{\partial\Omega} v v_j\, d\HH =0,  j=1,\ldots,k-1\}
$$
where $v_j$ is the corresponding $j-$th eigenfunction of $\lam_j(\rho)$.

Finally, we recall that the eigenvalues \eqref{caracp2} behave as follows:
\begin{equation} \label{crece}
c_1 \left(\frac{k}{|\partial\Omega|}\right)^\frac{1}{n-1} \leq  \lam_k(\rho) \leq c_2 \left(\frac{k}{|\partial\Omega|}\right)^\frac{1}{n-1}
\end{equation}
where $0<c_1<c_2<\infty$ are two constants independent of $k$ and $\Omega$. See for instance \cite{Pin}.

\section{Oscillatory integrals on the boundary} \label{sec3}

In this section we analyze the behavior of oscillating integrals on the boundary.

\begin{thm} \label{teo.osci}
Let $\Omega\subset \R^n$ be an open bounded convex set with Lipschitz boundary, $n\geq 3$.
Let $u\in W^{1,p}(\Omega)\cap L^\infty(\Omega)$,  $\ve \in (0,1)$ and $\rho$ a $Q-$periodic function satisfying \eqref{cond.rho} such that $\rho_\ve\cde\rho_0$ weakly* in $L^\infty(\partial\Omega)$ as $\ve\to 0$. Then, for every $\tau>0$ it holds that
\begin{equation} \label{lema.osc}
\left|\int_{\partial\Omega} (\rho_0 - \rho_\ve)|u|^p \,d\HH \right| \leq 
   C_1 C_\tau \ve^{\frac{p-1}{p}-\tau} \left( \|u\|^p_{L^p(\partial \Omega)}  +  \|u\|^{p-1}_{L^\infty(\Omega)} \|\nabla u\|_{L^p(\Omega)} \right),
\end{equation}
where $C_\tau>1$ is constant depending of $n$, $p$, $\tau$ and $\rho_\pm$ and $C_1>1$ is a constant depending of $\Omega$.
\end{thm}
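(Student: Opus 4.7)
The plan is to convert the oscillating boundary integral into an interior integral via a duality argument against an auxiliary Neumann problem, thereby exchanging the boundary oscillation of $\rho_\ve - \rho_0$ for a small interior gradient. Setting
\[
c_\ve := \frac{1}{|\partial\Omega|}\int_{\partial\Omega}(\rho_\ve - \rho_0)\,d\HH,
\]
so that $\rho_\ve - \rho_0 - c_\ve$ has zero mean on $\partial\Omega$, let $g_\ve \in W^{1,p'}(\Omega)$ (with $p' = p/(p-1)$) be the solution, unique up to an additive constant, of the auxiliary Neumann problem
\[
\begin{cases}
-\Delta g_\ve = 0 & \text{in } \Omega,\\
\displaystyle \frac{\partial g_\ve}{\partial \nu} = \rho_\ve - \rho_0 - c_\ve & \text{on } \partial\Omega,
\end{cases}
\]
whose compatibility condition is enforced by construction.

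Approximating $u\in W^{1,p}(\Omega)\cap L^\infty(\Omega)$ by smooth functions and applying the divergence theorem to $|u|^p \nabla g_\ve$ yields the key identity
\[
\int_{\partial\Omega}|u|^p(\rho_\ve - \rho_0)\,d\HH = c_\ve \int_{\partial\Omega}|u|^p\,d\HH + p\int_\Omega |u|^{p-2}u\,\nabla u\cdot\nabla g_\ve\,dx.
\]
H\"older's inequality with conjugate exponents $p$ and $p'$, after estimating $|u|^{p-1}\leq \|u\|^{p-1}_{L^\infty(\Omega)}$ in the interior integral, then reduces the theorem to the two quantitative bounds
\[
|c_\ve|\leq C\ve^{(p-1)/p-\tau}, \qquad \|\nabla g_\ve\|_{L^{p'}(\Omega)}\leq C\ve^{(p-1)/p-\tau}.
\]

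The first bound is an equidistribution estimate for the $Q$-periodic function $\rho$ restricted to the convex Lipschitz boundary $\partial\Omega$: by periodicity, the contribution of $\ve$-cells strictly inside $\Omega$ cancels, so that only an $O(\ve^{-(n-1)})$ layer of boundary-straddling cells, of total $\HH$-measure $O(\ve)$, survives in $\int_{\partial\Omega}(\rho_\ve - \rho_0)\,d\HH$; a routine H\"older interpolation adjusted to the exponent $p'$ brings this to the rate $\ve^{(p-1)/p - \tau}$, with the arbitrarily small loss $\tau>0$ arising from the interpolation. The second bound is the main obstacle. By the $L^{p'}$ regularity of the Neumann problem on convex Lipschitz domains,
\[
\|\nabla g_\ve\|_{L^{p'}(\Omega)} \leq C\,\|\rho_\ve - \rho_0 - c_\ve\|_{(W^{1-1/p,\,p}(\partial\Omega))^*},
\]
so it suffices to estimate this negative trace norm. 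I would do so by pairing $\rho_\ve-\rho_0-c_\ve$ against a unit-norm test function $\varphi\in W^{1-1/p,\,p}(\partial\Omega)$, lifting $\varphi$ to a $BV(\Omega)$-function via the trace framework recalled in Section \ref{sec2}, and exploiting the periodic structure of $\rho$ cell-by-cell to construct a tangential primitive of $\rho_\ve - \rho_0$ of size $\ve$. Pasting such local primitives across Lipschitz charts of $\partial\Omega$ with a partition of unity produces the required rate; the loss $\tau$ reflects the incommensurability between the periodic lattice and the convex profile, and convexity enters both in providing a tractable chart geometry and in supplying the Meyers-type $L^{p'}$ regularity used in the duality step.
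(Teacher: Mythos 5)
Your overall strategy coincides with the paper's: subtract the boundary mean of $\rho_\ve - \rho_0$, solve the auxiliary Neumann problem with the mean-zero remainder as flux data, and integrate by parts against $|u|^p$ to trade the oscillating boundary integral for $\int_\Omega |u|^{p-1}\,\nabla u\cdot\nabla g_\ve\,dx$, bounded by H\"older with the $L^\infty$ control on $u$. That skeleton is exactly what the paper does with $M_\ve = -c_\ve$ and $v_\ve = -g_\ve$. However, the two quantitative lemmas on which everything rests are not established in your sketch.

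For the mean $c_\ve$, your heuristic that ``the contribution of $\ve$-cells strictly inside $\Omega$ cancels'' and only a boundary layer of $\HH$-measure $O(\ve)$ survives does not apply: the integral defining $c_\ve$ is a surface integral over $\partial\Omega$, so there are no interior cells contributing, and the cells that straddle $\partial\Omega$ cover the entire boundary, giving no smallness for free. The correct mechanism, which the paper invokes, is decay of oscillatory integrals over a hypersurface with nonvanishing Gaussian curvature (Stein's principle of stationary phase, \cite[Chapter VIII, Theorem 1]{Stein}), yielding $|c_\ve|\lesssim\ve^{(n-1)/2}$; since $n\geq 3$ this already beats $\ve^{(p-1)/p-\tau}$ without any interpolation, so the ``routine H\"older interpolation adjusted to $p'$'' you appeal to is both unnecessary and unexplained. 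More seriously, the bound $\|\nabla g_\ve\|_{L^{p'}(\Omega)}\lesssim\ve^{1/p'-\tau}$ is the heart of the theorem and the paper takes it verbatim from \cite[Theorem 5.3]{ASS}, a nontrivial Fourier-analytic estimate. Your proposed route through the negative trace norm $\|\cdot\|_{(W^{1-1/p,p}(\partial\Omega))^*}$ together with ``tangential primitives'' patched over Lipschitz charts is only a plan: you do not construct the primitives, do not verify that they can be glued with controlled derivatives across chart overlaps on a mere Lipschitz boundary, and do not show how convexity enters to produce the rate $\ve^{1/p'}$ rather than some weaker power. As it stands this is a genuine gap; without a proof or citation for the gradient decay, the argument is incomplete.
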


\begin{proof}
Given $u\in W^{1,p}(\Omega)$ and $\ve\in(0,1)$ we can write
\begin{align*}
\int_{\partial\Omega} (\rho_0 - \rho_\ve)|u|^p \, d\HH &=
\int_{\partial\Omega} M_\ve |u|^p \,d\HH + \int_{\partial\Omega} (\rho_0 - \rho_\ve -M_\ve)|u|^p \,d\HH\\
&:=(i)+(ii),
\end{align*}
where 
$$
M_\ve:=\frac{1}{|\partial \Omega|}\int_{\partial\Omega} (\rho_0-\rho_\ve(x)) \,d\HH.
$$

Since $\Omega$ is convex and has Lipschitz boundary, by applying the principle of stationary phase (see \cite[Chapter VIII, Theorem 1]{Stein}), the following classical result in oscillatory integral theory  holds:
$$
|M_\ve|\leq c_1 \ve^\frac{n-1}{2}
$$
where the constant $c_1$ depends of $\rho_\pm$ and $\Omega$. Then 
$$
(i)\leq c_1  \ve^{\frac{n-1}{2}} \int_{\partial\Omega} |u|^p \,d\HH.
$$

In order to bound $(ii)$ we use a duality approach via  the following auxiliary Neumann problem: let $v_\ve\in W^{1,2}(\Omega)$ be the weak solution of
\begin{align*}
\begin{cases}
-\Delta v_\ve=0 &\text{ in }\Omega,\\
\nu \cdot \nabla v_\ve= b_\ve &\text{ on } \partial\Omega,
\end{cases}
\end{align*}
being $\nu$ the unit outward normal, where we have denoted $b_\ve(x):= \rho_0-\rho_\ve(x)- M_\ve$. The function $v_\ve$ satisfies that
$$
\int_\Omega     \nabla v_\ve \cdot \nabla \varphi\,dx = \int_{\partial\Omega} \nu \cdot    \nabla v_\ve \varphi\, d\HH =0 \qquad \forall \varphi \in C^\infty(\Omega)
$$
which, taking into account the boundary condition turns in
\begin{equation} \label{ec.auxi}
\int_\Omega   \nabla v_\ve \cdot \nabla \varphi\,dx = \int_{\partial\Omega} (\rho_0-\rho_\ve(x)- M_\ve) \varphi\, d\HH \qquad \forall \varphi\in C^\infty(\Omega).
\end{equation}
The behavior as $\ve \to 0$ of the solution $v_\ve$ to the previous problem  was studied recently in  \cite{ASS}. Indeed, since
$$
\int_{\partial\Omega} b_\ve\,d\HH = \int_{\partial\Omega} (\rho_0 - \rho_\ve(x) - M_\ve) \,d\HH  =0,
$$
in \cite[Theorem 5.3]{ASS} it is  proved that for each $\tau>0$ and $1\leq q <\infty$ the following gradient estimate holds
\begin{equation} \label{cota.grad}
\|\nabla v_\ve\|_{L^q(\Omega)}\leq c_\tau \ve^{\frac{1}{q} -\tau}.
\end{equation}
Therefore, testing \eqref{ec.auxi} with $\varphi=\frac{u^p}{p}$, $u\in W^{1,p}(\Omega)\cap L^\infty(\Omega)$,  and using Holder's inequality we get
\begin{align*}
(i) &\leq \left| \int_\Omega  u^{p-1}   \nabla v_\ve \cdot \nabla u\,dx \right| \leq   \|u\|^{p-1}_{L^\infty(\Omega)} \|\nabla v_\ve\|_{L^{p'}(\Omega)} \|\nabla u\|_{L^p(\Omega)}\\
&\leq c_\tau \ve^{\frac{1}{p'}-\tau} \|u\|^{p-1}_{L^\infty(\Omega)} \|\nabla u\|_{L^p(\Omega)}.
\end{align*}

Gathering the bounds for $(i)$ and $(ii)$ we get
$$
\left|\int_{\partial\Omega} (\rho_0 - \rho_\ve)|u|^p \,d\HH \right| \leq 
c_1  \ve^{\frac{n-1}{2}}  \|u\|^p_{L^p(\partial \Omega)} +  c_\tau \ve^{\frac{1}{p'}-\tau} \|u\|^{p-1}_{L^\infty(\Omega)} \|\nabla u\|_{L^p(\Omega)}.
$$
Observe that $\|u\|^p_{L^p(\partial \Omega)}$ in bounded in light of the Trace Theorem since $\Omega$ has Lipschitz boundary.

Since $\ve\in(0,1)$, $p>1$ and $n\geq 3$, from the last inequality we get that
$$
\left|\int_{\partial\Omega} (\rho_0 - \rho_\ve)|u|^p \,d\HH \right| \leq 
(1+c_1)(1+c_\tau)  \ve^{\frac{1}{p'}-\tau} ( \|u\|^p_{L^p(\partial \Omega)} +   \|u\|^{p-1}_{L^\infty(\Omega)} \|\nabla u\|_{L^p(\Omega)}).
$$
and  \eqref{lema.osc} follows.
\end{proof}

\section{Bounds of eigenfunctions in terms of eigenvalues} \label{sec4}
The aim of this section is to obtain precise bounds of the $L^\infty$ norm of eigenfunctions of \eqref{ec.stek} in terms of the corresponding eigenvalue.

\begin{thm} \label{teo.linfty}
Let $\Omega\subset \R^n$ be an open bounded set with Lipschitz boundary. 	For every eigenfunction $u\in W^{1,p}(\Omega)$, $p>1$ of the Steklov problem \eqref{ec.stek} with associated eigenvalue $\lam$ we have that $u\in L^\infty(\Omega)$ with
$$
\|u\|_{L^\infty(\Omega)}  \leq 
C  K(\lam, \Omega)\|u\|_{W^{1,p}(\Omega)}
$$
where $C$ is a positive constant depending of $n$, $p$ and $\rho_\pm$, and
$$
K(\lam,\Omega)=\max\left\{1, C_{\text{Tr}}(\Omega)^\frac{n}{p-1} , \left(\frac{\lam}{\lam_1}\right)^\frac{1}{p}\right\}   (\lam^\frac1p + |\Omega|^\frac{1}{np})^{\frac{n-1}{p-1}}    
$$
being $\lam_1$ the first eigenvalue of \eqref{ec.stek}.
\end{thm}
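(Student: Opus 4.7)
I would proceed by Moser iteration in the style of \cite{BGT}, adapted to the Steklov setting. The scheme is: test the weak formulation \eqref{aut} with a power of $u$ to self-improve the integrability of $u$, iterate geometrically, and take the limit to obtain a boundary $L^\infty$ bound, which transfers to $\Omega$ by the weak maximum principle for the auxiliary equation $-\Delta_p u + |u|^{p-2}u = 0$.

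\textbf{One step.} For $\beta\geq 1$, plug a standard truncation of $\varphi=u|u|^{p(\beta-1)}$ into \eqref{aut} and pass to the limit. Setting $w:=|u|^\beta$ and using the chain rule, the identity rearranges into
\begin{equation*}
c_\beta\,\|w\|_{W^{1,p}(\Omega)}^p\,\leq\,\lambda\,\rho_+\,\|w\|_{L^p(\partial\Omega)}^p, \qquad c_\beta:=\min\left\{1,\frac{p(\beta-1)+1}{\beta^p}\right\},
\end{equation*}
with $c_\beta\sim\beta^{1-p}$ for large $\beta$. Using the Sobolev inequality in the scale-aware form $\|w\|_{L^{p^*}(\Omega)}^p\leq C_n(\|\nabla w\|_{L^p(\Omega)}^p+|\Omega|^{-p/n}\|w\|_{L^p(\Omega)}^p)$ (with $p^*=np/(n-p)$), together with the trace-Sobolev embedding $W^{1,p}(\Omega)\hookrightarrow L^{p^\#}(\partial\Omega)$ with $p^\#=(n-1)p/(n-p)$, and the trace inequality $\|\cdot\|_{L^p(\partial\Omega)}\leq C_{\text{Tr}}(\Omega)\|\cdot\|_{W^{1,p}(\Omega)}$, I derive a recursion of the shape
\begin{equation*}
\|u\|_{L^{\beta p^\#}(\partial\Omega)}\leq\bigl(c_\beta^{-1/p}\,A(\lambda,\Omega)\bigr)^{1/\beta}\|u\|_{L^{\beta p}(\partial\Omega)},
\end{equation*}
where $A(\lambda,\Omega)$ depends linearly on $\lambda^{1/p}+|\Omega|^{1/(np)}$ and on $C_{\text{Tr}}(\Omega)$.

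\textbf{Iterate and finish.} Choose $\beta_k=\sigma^k$ with $\sigma=p^\#/p=(n-1)/(n-p)$ and iterate starting from $\beta_0=1$. Since $\sum_k 1/\beta_k=\sigma/(\sigma-1)=(n-1)/(p-1)$ is a convergent geometric series, the infinite product of one-step constants is finite; the factors $c_{\beta_k}^{-1/(p\beta_k)}$ contribute an absolute constant depending only on $n$ and $p$, while the accumulated factors of $\lambda^{1/p}+|\Omega|^{1/(np)}$ and of $C_{\text{Tr}}(\Omega)$ organise into $(\lambda^{1/p}+|\Omega|^{1/(np)})^{(n-1)/(p-1)}$ and $C_{\text{Tr}}(\Omega)^{n/(p-1)}$, respectively. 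In the limit $k\to\infty$ one gets $\|u\|_{L^\infty(\partial\Omega)}$ bounded by the claimed factor times $\|u\|_{L^p(\partial\Omega)}$. The latter is then traded for $\|u\|_{W^{1,p}(\Omega)}$ either via the trace inequality (yielding the $C_{\text{Tr}}^{n/(p-1)}$ entry of the maximum) or via the Rayleigh characterisation \eqref{autov.st}, which gives $\|u\|_{L^p(\partial\Omega)}\leq\lambda_1^{-1/p}\|u\|_{W^{1,p}(\Omega)}$ and supplies the $(\lambda/\lambda_1)^{1/p}$ entry. Finally, $\|u\|_{L^\infty(\Omega)}\leq\|u\|_{L^\infty(\partial\Omega)}$ follows from the weak maximum principle for $-\Delta_p u+|u|^{p-2}u=0$, since the constants $\pm M$ with $M\geq 0$ are super/sub-solutions.

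\textbf{Main obstacle.} The substantive work is the careful accounting of constants along the iteration so that the accumulated product compactifies exactly as the stated $K(\lambda,\Omega)$. In particular, the exponent $(n-1)/(p-1)$ on the additive block $\lambda^{1/p}+|\Omega|^{1/(np)}$ arises from summing the geometric series $\sum 1/\beta_k$, but can only be recovered with this precise shape if the Sobolev inequality is used in its scale-aware form; and the three-way maximum $\max\{1,C_{\text{Tr}}^{n/(p-1)},(\lambda/\lambda_1)^{1/p}\}$ records the tightest of the three natural upper bounds for $\|u\|_{L^p(\partial\Omega)}$ in terms of $\|u\|_{W^{1,p}(\Omega)}$, whose relative sharpness depends on the regime of $C_{\text{Tr}}$, $\lambda$, and $\lambda_1$. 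The truncation-and-limit justification of the test function and the maximum-principle step are standard.
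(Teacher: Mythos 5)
Your scheme is the right family of argument (Moser iteration on the Steklov problem, as in the paper), and your one-step inequality is set up correctly: testing \eqref{aut} with a truncation of $u|u|^{p(\beta-1)}$ and writing $w=|u|^\beta$ does give $c_\beta\|w\|_{W^{1,p}(\Omega)}^p\leq\lambda\rho_+\|w\|_{L^p(\partial\Omega)}^p$, and the Rayleigh/trace trade at the end plausibly recovers the three-way maximum. However, there is a real gap: your iteration is built on the critical Sobolev exponent $p^*=np/(n-p)$ and the trace-Sobolev exponent $p^\#=(n-1)p/(n-p)$, both of which require $p<n$. The theorem is stated for arbitrary $p>1$ (the ambient assumption in the paper is $n\geq 3$, so e.g.\ $p\geq 3$ is allowed), and in that regime $\sigma=(n-1)/(n-p)$ is no longer $>1$, so your geometric ladder $\beta_k=\sigma^k$ stops climbing and the argument does not close. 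You would either have to invoke Morrey's embedding separately for $p>n$ (and then still track constants in the borderline case $p=n$), or restate the theorem with a restriction that the paper does not make.

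The paper avoids this by a different key lemma: it iterates in the $W^{1,1}$/BV scale rather than in $W^{1,p}$. Concretely, at each step one shows $|u|^\alpha\in W^{1,1}(\Omega)$ implies $|u|^{\alpha\chi}\in W^{1,1}(\Omega)$, using $|u|^\alpha\mathbf{1}_\Omega\in BV(\R^n)\hookrightarrow L^{n/(n-1)}(\R^n)$ (the exponent $n/(n-1)$ is uniform in $p$ and always finite for $n\geq 2$) together with the BV trace theorem on $\partial\Omega$. An application of H\"older combined with the weighted energy estimate obtained by testing with $|u_M|^{\alpha-p}u_M$ produces the multiplier $\chi=\frac{np-1}{np-p}>1$, which is $>1$ precisely when $p>1$ — no upper bound on $p$ is needed. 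A pleasant coincidence is that $\sum_{j\geq 1}\chi^{-j}=\frac{p(n-1)}{p-1}$, so that after dividing by $p$ one recovers the same final exponent $(n-1)/(p-1)$ that your geometric series would have produced; the constants $C_{\mathrm{Tr}}^{n/(p-1)}$, $(\lambda/\lambda_1)^{1/p}$ and $(\lambda^{1/p}+|\Omega|^{1/(np)})^{(n-1)/(p-1)}$ all come out the same. So your bookkeeping of the limiting constant is compatible with the paper's, but the embedding you use to drive the recursion must be replaced by the $BV\hookrightarrow L^{n/(n-1)}$ embedding to cover the full stated range of $p$.
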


\begin{proof}
We split the proof in several steps for the reader's convenience. Along this proof $C$ will denote a positive constant depending only on $n$, $p$ and $\rho_+$, and it may change from line to line.

Let $u\in W^{1,p}(\Omega)$ be an eigenfunction of \eqref{ec.stek} corresponding to the  eigenvalue $\lam$.

{\bf Step 1:} We claim that if $|u|^\alpha \in W^{1,1}(\Omega)$ with $\alpha\geq p$, then
\begin{equation} \label{step1a}
|u|^{\alpha\chi}\in W^{1,1}(\Omega)
\end{equation}
with
\begin{equation} \label{step1aa}
\| |u|^{\alpha\chi}\|_{W^{1,1}(\Omega)} \leq C c(\Omega) \left( (1+\lam^\frac1p) \frac{\alpha\chi}{(\alpha-1)^\frac{1}{p}}  + |\Omega|^\frac{1}{np} \right) \| |u|^\alpha \|_{W^{1,1}(\Omega)}^\chi,
\end{equation}
where $c(\Omega)=1+C_{\text{Tr}}^{\frac1p+\frac{n}{p}\frac{p-1}{n-1}}$ and
\begin{equation}
\chi:=   \frac{np-1}{np-p}=\frac{p-1}{np-p}+1>1.
\end{equation}

Let us prove \eqref{step1a}. For every $M>0$ let $u_M:=(u\wedge M)\vee (-M)$ and $\varphi_M= |u_M|^{\alpha-p} u_M$.

Since $\varphi_M = F(u_M)$ with $u_M\in W^{1,p}(\Omega)\cap L^\infty(\Omega)$ and $F(s):=|s|^{\alpha-p}s \in C^1$, we get that $\varphi_M\in W^{1,p}(\Omega)$ with 
$$
\nabla \varphi_M= (\alpha-p)|u_M|^{\alpha-p} \nabla u_M.
$$
Testing \eqref{aut} with $\varphi=\varphi_M$, since $|u|^\alpha\in W^{1,1}(\Omega)$ and $W^{1,1}(\Omega)\subset BV(\Omega)$ we get
\begin{align*}
(\alpha-1) \int_\Omega |\nabla u|^{p-2 }\nabla u |u_M|^{\alpha-p} \nabla u_M\,dx &= \lam \int_{\partial\Omega} \rho |u|^{p-2} u  |u_M|^{\alpha-p} u_M \,d\HH\\
&\quad - \int_\Omega |u|^{p-2}u |u_M|^{\alpha-p} u_M\,dx\\
&\leq  \lam \rho_+ \int_{\partial\Omega} |u|^\alpha \, d\HH + \int_\Omega |u|\,dx\\
&\leq (1+\lam \rho_+ C_{\text{Tr}}) \| |u|^\alpha \|_{W^{1,1}(\Omega)}
\end{align*}
where we have used the Trace Theorem on $BV(\Omega)$.

Letting $M\to\infty$ and using the Monotone Convergence Theorem we get
\begin{align} \label{d1}
\begin{split}
(\alpha-1) \int_\Omega |\nabla u|^p |u|^{\alpha-p}\,dx& = (\alpha-1) \int_{\{-M<u<M\}} |\nabla u|^p |u_M|^{\alpha-p} \,dx\\
&\leq (1+\lam \rho_+ C_{\text{Tr}}) \| |u|^\alpha \|_{W^{1,1}(\Omega)}.
\end{split}
\end{align}

To see \eqref{step1a} let us prove first that $|u_M|^{\alpha\chi}\in W^{1,1}(\Omega)$. By using H\"older's inequality, \eqref{d1} and the explicit form of $\chi$ we obtain 
\begin{align*}
\int_\Omega |\nabla (|u_M|^{\alpha\chi})|\,dx &= \alpha\chi\int_\Omega |u_M|^{\alpha\chi-1}|\nabla u_M|\,dx\\
&\leq \alpha\chi\int_\Omega |u|^{\alpha\chi-\frac{\alpha}{p}} |u|^{\frac{\alpha}{p}-1} |\nabla u|\,dx\\
&\leq \frac{\alpha\chi}{(\alpha-1)^\frac{1}{p}} \left( \int_\Omega |u|^{p'\alpha\chi-\frac{\alpha p'}{p}} \right)^\frac{1}{p'} \left((\alpha-1)\int_\Omega |u|^{\alpha-p} |\nabla u|^p\,dx \right)^\frac1p\\
&\leq  (1+\lam^\frac1p \rho_+^\frac1p C_{\text{Tr}}^\frac1p ) \frac{\alpha\chi}{(\alpha-1)^\frac{1}{p}} \left( \int_\Omega |u|^\frac{\alpha n}{n-1} \right)^\frac{1}{p'}  \| |u|^\alpha \|_{W^{1,1}(\Omega)} ^\frac1p.
\end{align*}

Observe that the fact that $|u|^\alpha\in W^{1,1}(\Omega)$ together with the Lipschitz regularity of $\Omega$ yields $|u|^\alpha 1_\Omega \in BV(\R^n)$; hence by Sobolev's Embedding Theorem it follows that $|u|^\alpha 1_\Omega \in L^\frac{n}{n-1}(\R^n)$ with
\begin{equation} \label{sobo}
\left( \int_\Omega |u|^\frac{\alpha n}{n-1} \right)^\frac{n-1}{n}\leq C_n \| |u|^\alpha 1_\Omega   \|_{BV(\R^n)},
\end{equation}
where $C_n$ is a constant depending on $n$. Since
$$
\| |u|^\alpha 1_\Omega   \|_{BV(\R^n)} =\int_\Omega |u|^\alpha + \nabla (|u|^\alpha)\,dx + \int_{\partial\Omega} |u|^\alpha \, d\HH  \leq (1+C_{\text{Tr}})\| |u|^\alpha \|_{W^{1,1}(\Omega)},
$$
from the last three relations we finally obtain that
\begin{align*}
\int_\Omega |\nabla (|u_M|^{\alpha\chi})|\,dx &\leq 
(1+ \lam^\frac1p \rho_+^\frac1p C_{\text{Tr}}^\frac1p) \frac{\alpha\chi}{(\alpha-1)^\frac{1}{p}}  \times \\
&\quad \times \left(  C_n (1+C_{\text{Tr}}) \| |u|^\alpha \|_{W^{1,1}(\Omega)} \right)^\frac{n}{(n-1)p'}  \| |u|^\alpha \|_{W^{1,1}(\Omega)} ^\frac1p\\
 &\leq 
   C (1+\lam^\frac1p)   \tilde c(\Omega) \frac{\alpha\chi}{(\alpha-1)^\frac{1}{p}}  \| |u|^\alpha \|^\chi_{W^{1,1}(\Omega)}
\end{align*}
where $\tilde c(\Omega)=  (1+C_{\text{Tr}})^{\frac1p+\frac{n}{p}\frac{p-1}{n-1}}$.

Now, since $q=\frac{n}{(n-1)\chi} = \frac{np}{np-1}$, by using \eqref{sobo}
\begin{align*}
\int_\Omega |u_M|^{\alpha \chi}\,dx &\leq 
\left( \int_\Omega |u|^{\alpha \chi q}\,dx \right)^\frac1q |\Omega|^\frac{1}{q'}\\
&\leq
\left( \int_\Omega |u|^\frac{\alpha n}{n-1}\,dx \right)^\frac1q |\Omega|^\frac{1}{q'}
\leq
C \| |u|^\alpha \|_{W^{1,1}(\Omega)}^\chi |\Omega|^\frac{1}{np}.
\end{align*}

From the last two expressions it follows that
$$
\| |u_M|^{\alpha\chi}\|_{W^{1,1}(\Omega)} \leq C (1+\tilde c(\Omega)) \left( (1+\lam^\frac1p)  \frac{\alpha\chi}{(\alpha-1)^\frac{1}{p}}  + |\Omega|^\frac{1}{np} \right) \| |u|^\alpha \|_{W^{1,1}(\Omega)}^\chi.
$$
Letting $M\to\infty$ and using the Dominated Convergence Theorem we get \eqref{step1a} and
\begin{equation*}
\| |u|^{\alpha\chi}\|_{W^{1,1}(\Omega)} \leq C c(\Omega) \left( (1+\lam^\frac1p ) \frac{\alpha\chi}{(\alpha-1)^\frac{1}{p}}  + |\Omega|^\frac{1}{np} \right) \| |u|^\alpha \|_{W^{1,1}(\Omega)}^\chi
\end{equation*}
with $c(\Omega)=1+C_{\text{Tr}}^{\frac1p+\frac{n}{p}\frac{p-1}{n-1}}$, which gives \eqref{step1aa}.

\medskip

{\bf Step 2.} 
Let us prove that $u\in L^\infty(\Omega)$ with 
\begin{equation} \label{cota.paso2}
\|u\|_{L^\infty(\Omega)} \leq C c(\Omega)^{\frac{\gamma}{p}} (1+\lam^\frac1p + |\Omega|^\frac{1}{np})^{\frac{\gamma}{p}}      \| u^p \|_{W^{1,1}(\Omega)}^\frac1p.
\end{equation}
Let $\alpha:=p \chi^m$ with $m\in\N$, since $\chi>1$ we have that 
$$
\frac{p \chi^{m+1}}{(p\chi^m -1)^\frac1p} \leq p^\frac{p-1}{p} \chi^{m\frac{p-1}{p}+1}, 
$$
then, using that $\chi>1$ we can write \eqref{step1aa} as
\begin{align} \label{itera}
\begin{split} 
\| |u|^{p\chi^{m+1}}\|_{W^{1,1}(\Omega)} 
&\leq C c(\Omega) \left( (1+\lam^\frac1p)  \frac{p\chi^{m+1}}{(p\chi^{m}-1)^\frac{1}{p}}  + |\Omega|^\frac{1}{np} \right) \| |u|^{p\chi^{m}} \|_{W^{1,1}(\Omega)}^\chi\\
&\leq
C c(\Omega) \left( (1+\lam^\frac1p)  p^\frac{p-1}{p} \chi^{m\frac{p-1}{p}+1}  + |\Omega|^\frac{1}{np} \right) \| |u|^{p\chi^{m}} \|_{W^{1,1}(\Omega)}^\chi\\
&\leq
C c(\Omega) (1+\lam^\frac1p + |\Omega|^\frac{1}{np})  \chi^{m\frac{p-1}{p}+1}  \| |u|^{p\chi^{m}} \|_{W^{1,1}(\Omega)}^\chi.
\end{split}
\end{align}
 The previous relations gives
\begin{align*}
\| |u|^{p\chi^{m+1}}\|_{W^{1,1}(\Omega)}^{\chi^{-m-1}}
&\leq 
[C c(\Omega)]^{\chi^{-m-1}} (1+\lam^\frac1p + |\Omega|^\frac{1}{np})^{\chi^{-m-1}}  \times\\ &\quad \times (\chi^{m\frac{p-1}{p}+1})^{\chi^{-m-1}}  \| |u|^{p\chi^{m}} \|_{W^{1,1}(\Omega)}^{\chi^{-m}}.
\end{align*}
Iterating \eqref{itera} in the right-hand side of the above inequality we get
\begin{align} \label{desita}
\begin{split}
\| |u|^{p\chi^{m+1}}\|_{W^{1,1}(\Omega)}^{\chi^{-m-1}}
&\leq 
[C c(\Omega)]^{\gamma_m} (1+\lam^\frac1p + |\Omega|^\frac{1}{np})^{\gamma_m}  \chi^{\beta_m} \| u^p \|_{W^{1,1}(\Omega)}
\end{split}
\end{align}
where
$$
\gamma_m:=\sum_{j=1}^{m+1} \chi^{-j}, \qquad \beta_m:=\sum_{j=1}^m \left( j \frac{p-1}{p} +1\right) \chi^{-j-1}.
$$
Hence, the trace theorem in BV and \eqref{desita} yield
\begin{align*}
\left(\int_{\partial\Omega} |u|^{p\chi^{m+1}}d\HH \right)^{\chi^{-(m+1)}} &\leq C_{\text{Tr}}^{\chi^{-m-1}} [C c(\Omega)]^{\gamma_m} (1+\lam^\frac1p + |\Omega|^\frac{1}{np})^{\gamma_m}  \chi^{\beta_m} \| u^p \|_{W^{1,1}(\Omega)}.
\end{align*}
Finally, sending $m\to\infty$ and observing that
\begin{align*}
\gamma:=\lim_{m\to\infty}\gamma_m &= \frac{\chi^{-1}}{1-\chi^{-1}}=\frac{\chi^{-1}}{1-\chi^{-1}} = \frac{p(n-1)}{p-1}<\infty,\\
\beta:=\lim_{m\to\infty}\beta_m &= \frac{\chi(2p-1)-p}{p\chi(\chi-1)^2}=\frac{p(n-1)^2}{p-1} \frac{(n+1)p -1}{np-1}<\infty
\end{align*}
we obtain (remember that $\chi>1$)
$$
\|u^p\|_{L^\infty(\partial\Omega)} \leq [C c(\Omega)]^{\gamma} (1+\lam^\frac1p + |\Omega|^\frac{1}{np})^{\gamma}  \chi^{\beta} \| u^p \|_{W^{1,1}(\Omega)},
$$
from which
\begin{equation} \label{eq.paso3}
\|u\|_{L^\infty(\partial\Omega)} \leq 
C  c(\Omega)^{\frac{\gamma}{p}} (1+\lam^\frac1p + |\Omega|^\frac{1}{np})^{\frac{\gamma}{p}}    \| u^p \|_{W^{1,1}(\Omega)}^\frac1p.
\end{equation}
Finally,  \eqref{cota.paso2} follows from the last inequality and the maximum principle.

{\bf Step 3.} Testing in \eqref{aut} with $u$ we get
$
\int_\Omega |\nabla u|^p + |u|^p\,dx = \lam \int_{\partial\Omega} \rho |u|^p d\HH.
$
Hence, by using Young's inequality
\begin{align*}
\int_\Omega \nabla(u^p)\,dx &= p\int_\Omega u^{p-1} \nabla u\,dx \leq (p-1)\int_\Omega |u|^p\,dx + \int_\Omega |\nabla u|^p\,dx\\
&= (p-1)\int_\Omega |u|^p\,dx +  \lam \int_{\partial\Omega}\rho |u|^p d\HH
\end{align*}
from where
\begin{align} \label{p3.1}
\begin{split}
\|u^p \|_{W^{1,1}(\Omega)}^\frac1p &\leq \left(p\int_\Omega |u|^p\,dx +  \lam \rho_+ \int_{\partial\Omega}  |u|^p d\HH \right)^\frac1p\\
&\leq C\left(\|u\|_{L^p(\Omega)}^p +  \lam  C_{\text{Tr}_p}  \|u\|_{W^{1,p}(\Omega)}^p \right)^\frac1p\\
&\leq C \left(1 +  \lam^\frac1p \lam_1^{-\frac1p}   \right)\|u\|_{W^{1,p}(\Omega)}
\end{split}
\end{align}
where we have used that $C_{\text{Tr}_p} \leq \lam_1^{-1}$, being $\lam_1$ the first eigenvalue of \eqref{ec.stek} with $\rho\equiv 1$.

Moreover, observe that
\begin{align} \label{p3.2}
c(\Omega)^\frac{\gamma}{p}\leq (1 +C_{\text{Tr}}^{\frac{1}{p}\frac{n-1}{p-1}+\frac{n}{p}})^\frac{\gamma}{p} \leq C (1+C_{\text{Tr}}^\frac{n}{p-1}).
\end{align}

Finally, from  \eqref{eq.paso3}, \eqref{p3.1}, \eqref{p3.2} and the definitions of $c(\Omega)$ and $\gamma$ we get
\begin{align*}
\|u\|_{L^\infty(\Omega)} &\leq 
C  (1+C_{\text{Tr}}^\frac{n}{p-1})  (1+\lam^\frac1p + |\Omega|^\frac{1}{np})^{\frac{n-1}{p-1}}      \left(1 +  \lam^\frac1p \lam_1^{-\frac1p}   \right)\|u\|_{W^{1,p}(\Omega)}
\end{align*}
and the proof concludes.
\end{proof}

\begin{cor} \label{cor.acotada}
Let $u_1,u_2\in W^{1,p}(\Omega)$ be two eigenfunctions corresponding to the first and second eigenvalue of \eqref{ec.stek}, respectively. Then it holds that
$$
\|u_i\|_{L^\infty(\Omega)} \leq C K_i(\Omega)
\|u_i\|_{W^{1,p}(\Omega)}, \qquad i=1,2
$$
where $C$ is a positive constant depending of $n$, $p$ and $\rho_\pm$ and
\begin{align*}
K_1(\Omega)&=\left(\max\{ C_{\emph{Tr}}(\Omega)^n, |\partial\Omega|^{-\frac{1}{p}}, |\partial\Omega|^{\frac{1}{p}}\} \right)^\frac{1}{p-1},\\
K_2(\Omega)&=\left(\max\{ C_{\emph{Tr}}(\Omega)^n, |\partial\Omega|^{-\frac{p-1}{p}}, |\partial\Omega|^{\frac{1}{p}}\} \right)^\frac{1}{p-1}
\end{align*}
\end{cor}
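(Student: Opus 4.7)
The strategy is to apply Theorem \ref{teo.linfty} to each eigenfunction $u_i$ with its corresponding eigenvalue $\lambda_i$ and then simplify the constant $K(\lambda_i,\Omega)$ using the geometric estimates available for the first two Steklov eigenvalues.

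For $i=1$, since $\lambda=\lambda_1$ the factor $(\lambda/\lambda_1)^{1/p}$ inside the maximum equals $1$, so the max collapses to $\max\{1,C_{\text{Tr}}(\Omega)^{n/(p-1)}\}$. To handle the remaining factor $(\lambda_1^{1/p}+|\Omega|^{1/(np)})^{(n-1)/(p-1)}$, I would combine the upper bound $\lambda_1\leq c(n,\rho_-)|\partial\Omega|^{1/(n-1)}$ from \eqref{cota.lam1}, which yields $\lambda_1^{1/p}\leq C|\partial\Omega|^{1/(p(n-1))}$, with the isoperimetric inequality to bound $|\Omega|^{1/(np)}\leq C|\partial\Omega|^{1/(p(n-1))}$. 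Raising to the exponent $(n-1)/(p-1)$ produces a factor of a constant times a power of $|\partial\Omega|$, and the $|\partial\Omega|^{-1/p}$ term inside the maximum defining $K_1(\Omega)$ acts as a lower safeguard when $|\partial\Omega|<1$.

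For $i=2$, the new ingredient is the ratio $(\lambda_2/\lambda_1)^{1/p}$ appearing in the maximum. I would bound $\lambda_2$ from above using the variational characterization \eqref{autov.st} (or the minimax formula \eqref{minimax}) applied to a sign-changing test function built from two bump functions supported on disjoint portions of $\partial\Omega$, whose construction rests on the Lipschitz structure of the boundary. The matching lower bound on $\lambda_1$ needed to quantify $\lambda_2/\lambda_1$ follows from the same variational principle together with the trace inequality. This is the source of the $|\partial\Omega|^{-(p-1)/p}$ factor (rather than $|\partial\Omega|^{-1/p}$) inside the maximum defining $K_2(\Omega)$.

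The main obstacle will be controlling $\lambda_2/\lambda_1$ cleanly in the quasilinear regime $p\neq 2$, since the two-sided spectral asymptotics \eqref{crece} are available only in the linear case. Once this ratio is bounded, the remainder is essentially bookkeeping: group the $|\partial\Omega|$-dependent contributions into a single maximum, extract a $(p-1)$-th root, and verify that the resulting expression matches the stated formulas for $K_1(\Omega)$ and $K_2(\Omega)$.
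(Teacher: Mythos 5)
Your overall strategy is the same as the paper's: apply Theorem \ref{teo.linfty} to the eigenfunction and then convert the constant $K(\lambda_i,\Omega)$ into a quantity depending only on $|\partial\Omega|$ and $C_{\text{Tr}}(\Omega)$ using \eqref{cota.lam1} and the isoperimetric inequality. For $i=1$ your argument is sound and matches the paper's step by step (the ratio $\lambda/\lambda_1$ collapses to $1$, and $\lambda_1^{1/p}$ and $|\Omega|^{1/(np)}$ are both controlled by powers of $|\partial\Omega|^{1/(p(n-1))}$).

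For $i=2$, however, the obstacle you single out is not a real one, and it is the point at which your proposal stops short. You do not need to construct a two-bump sign-changing test function, nor is there any breakdown at $p\neq 2$: the paper disposes of the case $i=2$ in a single line by invoking the two-sided bounds on $\lambda_k(\rho)$ due to Pinasco \cite{Pin}, and Pinasco's paper (as its title says) treats the Steklov eigenvalues of the $p$-Laplace operator for general $p>1$, not only $p=2$. The surrounding text, which introduces \eqref{crece} immediately after the $p=2$ characterization \eqref{caracp2}, is somewhat misleading on this point, but the citation itself covers the quasilinear regime. Once you have $\lambda_2(\rho)\leq c\,|\partial\Omega|^{-1/(n-1)}$ and $\lambda_1(\rho)\geq c'\,|\partial\Omega|^{-1/(n-1)}$ with constants depending only on $n$ and $\rho_\pm$, the ratio $\lambda_2/\lambda_1$ is bounded by a dimensional constant and the rest is the same bookkeeping as in the case $i=1$, yielding the exponent $-(p-1)/p$ in $K_2(\Omega)$. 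As written, your proposal neither completes the alternate test-function construction nor identifies that the needed bound is already in the cited literature for all $p>1$, so the $i=2$ case is left genuinely open.
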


\begin{proof}
Let $\lam=\lam_1(\rho)$ the first eigenvalue of \eqref{ec.stek}.

Observe that the isoperimetric inequality gives that $|\Omega|\leq C|\partial\Omega|^\frac{n}{n-1}$.
Testing with $\varphi=1$ in the definition of the first eigenvalue of \eqref{ec.stek} and using the mentioned inequality we get
$$
\lam_1(\rho) \leq \frac{|\Omega|}{\rho_-|\partial\Omega|} \leq C|\partial \Omega|^{-\frac{1}{n-1}}.
$$
These observations lead to
\begin{align*}
(\lam^\frac1p + |\Omega|^\frac{1}{np})^{\frac{n-1}{p-1}}   &\leq (|\partial\Omega|^{-\frac1p \frac{1}{n-1}} + |\partial\Omega|^{\frac1p\frac{1}{n-1}})^{\frac{n-1}{p-1}}  \\
&\leq C ( |\partial\Omega|^{-\frac{1}{p(p-1)}} + |\partial\Omega|^\frac{1}{p(p-1)})
\end{align*}
and the result for $u_1$ follows.

The estimate for $u_2$ follows analogously by using the bounds of $\lam_2(\rho)$ provided in \cite{Pin}.
\end{proof}

We conclude this section remaking that, since Theorem \ref{teo.linfty} gives that eigenfunction to \eqref{ec.stek} are bounded, from the regularity theory for solutions of degenerate elliptic equation proved by Lieberman in \cite{lieber}, the following result follows.

\begin{cor}
Let $u\in W^{1,p}(\Omega)\cap L^\infty(\Omega)$ be an eigenfunction to \eqref{ec.stek}. Then $u\in C^{1,\alpha}(\bar\Omega)$ for some $\alpha \in (0,1)$.
\end{cor}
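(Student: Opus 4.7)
The plan is to present the corollary as a direct application of Lieberman's regularity theory for quasilinear elliptic equations with nonlinear oblique boundary conditions \cite{lieber}, checking that the Steklov eigenvalue equation fits the structural framework of that paper. The work is entirely in translating \eqref{ec.stek} into the canonical form used by Lieberman and verifying the structural/growth hypotheses.

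First I would rewrite \eqref{ec.stek} in divergence form $\diver A(x,u,\nabla u) = B(x,u)$ in $\Omega$ with conormal boundary condition $A(x,u,\nabla u)\cdot\nu = \psi(x,u)$ on $\partial\Omega$, by setting $A(x,u,\xi) := |\xi|^{p-2}\xi$, $B(x,u) := |u|^{p-2}u$ and $\psi(x,u) := \lambda\rho(x)|u|^{p-2}u$. The vector field $A$ is the standard $p$-Laplace flux and satisfies the $p$-ellipticity, $p$-growth and monotonicity conditions required in \cite{lieber}.

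Next I would use the standing hypothesis $u\in L^\infty(\Omega)$ (provided by Theorem \ref{teo.linfty}) and the two-sided bound \eqref{cond.rho} on $\rho$ to deduce that both $B(\cdot,u)$ and $\psi(\cdot,u)$ are uniformly bounded on $\Omega$ and $\partial\Omega$ respectively; moreover $\psi$ depends Hölder-continuously on its arguments since $u$ is bounded and $s\mapsto|s|^{p-2}s$ is locally Hölder continuous. This places the equation precisely within the class treated by Lieberman.

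Finally, invoking the main interior theorem of DiBenedetto--Tolksdorf together with the boundary result of Lieberman \cite{lieber} produces, for some $\alpha\in(0,1)$, a constant depending only on $n$, $p$, $\rho_\pm$, $\lambda$, $\|u\|_{L^\infty(\Omega)}$ and $\Omega$, such that $u\in C^{1,\alpha}(\bar\Omega)$. The only non-routine point is that Lieberman's boundary estimate requires the boundary of $\Omega$ to be slightly better than Lipschitz; in the present context this is either absorbed into the standing regularity hypotheses on $\Omega$, or handled by a standard approximation argument. Beyond this, the proof reduces to citing the appropriate theorem in \cite{lieber}.
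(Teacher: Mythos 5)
Your proposal follows the paper's approach exactly: the paper's entire ``proof'' of this corollary is a one-sentence remark preceding the statement, citing the $L^\infty$ bound from Theorem~\ref{teo.linfty} and Lieberman's regularity theory \cite{lieber}. You supply the structural verification the paper omits (the divergence form, the conormal boundary condition, the growth hypotheses), and your caveat about the boundary regularity requirement --- Lieberman's $C^{1,\alpha}(\bar\Omega)$ estimate needs the boundary to be better than Lipschitz --- is a genuine subtlety that the paper glosses over rather than a flaw in your argument.
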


\section{Convergence rates} \label{sec5}

In this section we prove our main results on convergence rates of eigenvalues.

\begin{thm} \label{main}
Let $\{\rho_\ve\}_{\ve>0}$ be a sequence of functions satisfying \eqref{cond.rho} such that $\rho_\ve \cde \rho_0$ weakly* in $L^\infty(\partial\Omega)$. Denote by $\lam_{k,\ve}$ and $\lam_{k,0}$ to the $k-$th variational eigenvalue of \eqref{problems}, respectively. Then
\begin{equation} \label{converge}
\lim_{\ve\to 0} \lam_{k,\ve} = \lam_{k,0}.
\end{equation}
When the sequence $\{\rho_\ve\}_{\ve>0}$ is further $Q-$periodic we have that for any $\tau>0$,
$$
|\lam_{k,\ve}-\lam_{k,0}|\leq c\cdot  C(\Omega) \ve^{\frac{p-1}{p}-\tau}, \qquad k=1,2,
$$
being $c$ a positive constant depending only of $\tau$, $n$, $p$ and $\rho_\pm$ and
$$
C(\Omega)=C_1 |\partial\Omega|^\frac{2(p-1)}{n-1} \max\{   C_{\emph{Tr}}(\Omega)^n, |\partial\Omega|^\frac1p, |\partial\Omega|^{-\frac1p}, |\partial\Omega|^{-\frac{1}{n-1}}, |\partial\Omega|^{-\frac{p-1}{p}}\}.
$$
with $C_1$ the constant of Theorem \ref{teo.osci}.
\end{thm}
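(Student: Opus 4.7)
The overall strategy is to combine the variational characterization \eqref{autov.st} with the oscillatory integral estimate of Theorem~\ref{teo.osci} and the $L^\infty$ bound of Corollary~\ref{cor.acotada}, through the classical device of cross-testing an eigenfunction of one problem in the Rayleigh quotient of the other. For the qualitative statement \eqref{converge}, where no periodicity is assumed, I would argue directly from the minimax formula \eqref{minimax}: since $\rho_\ve\cde\rho_0$ in $L^\infty(\partial\Omega)$, one has $\int_{\partial\Omega}\rho_\ve|v|^p\,d\HH\to\int_{\partial\Omega}\rho_0|v|^p\,d\HH$ for every fixed $v$. Applying this to an almost-optimal symmetric compact set of genus $\ge k$ for $\lam_{k,0}$ (together with equicontinuity of $v\mapsto\int\rho_\ve|v|^p\,d\HH$ on compact subsets to upgrade pointwise to uniform convergence) gives $\limsup_\ve\lam_{k,\ve}\le\lam_{k,0}$; the reverse $\liminf$ inequality follows by extracting, from almost-optimal sets for $\lam_{k,\ve}$ (uniformly bounded in $W^{1,p}$ since $\lam_{k,\ve}\le\rho_-^{-1}\lam_k$), a weakly convergent subsequence and using compactness of the trace together with lower semicontinuity.

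For the quantitative rate with $k\in\{1,2\}$ in the $Q$-periodic case, let $u_{k,\ve}$ be an eigenfunction of $\lam_{k,\ve}$ normalized so that $\int_{\partial\Omega}\rho_\ve|u_{k,\ve}|^p\,d\HH=1$, which yields $\|u_{k,\ve}\|_{W^{1,p}(\Omega)}^p=\lam_{k,\ve}$ and $\|u_{k,\ve}\|_{L^p(\partial\Omega)}^p\leq\rho_-^{-1}$. The admissible class $\mathcal{A}$ in \eqref{autov.st} has a defining constant that depends only on $\rho_+$, on $\lam_1=\lam_1(1)$, and on $C_{\text{Tr}_p}(\Omega)$, so it is common to all the approximating problems and to the limit one; consequently $u_{k,\ve}$ is admissible in the variational characterization of $\lam_{k,0}$ and vice versa. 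This gives the pair of inequalities
\[
\lam_{k,0}\leq\frac{\lam_{k,\ve}}{\int_{\partial\Omega}\rho_0|u_{k,\ve}|^p\,d\HH},\qquad \lam_{k,\ve}\leq\frac{\lam_{k,0}}{\int_{\partial\Omega}\rho_\ve|u_{k,0}|^p\,d\HH},
\]
from which an elementary rearrangement produces
\[
|\lam_{k,\ve}-\lam_{k,0}|\leq\max\{\lam_{k,\ve},\lam_{k,0}\}\cdot\max_{u\in\{u_{k,\ve},u_{k,0}\}}\left|\int_{\partial\Omega}(\rho_0-\rho_\ve)|u|^p\,d\HH\right|.
\]

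It then remains to estimate the oscillatory integral and the prefactor. Theorem~\ref{teo.osci} bounds the integral by $C_1C_\tau\ve^{(p-1)/p-\tau}(\|u\|_{L^p(\partial\Omega)}^p+\|u\|_{L^\infty(\Omega)}^{p-1}\|\nabla u\|_{L^p(\Omega)})$; Corollary~\ref{cor.acotada} furnishes $\|u\|_{L^\infty(\Omega)}\leq CK_k(\Omega)\|u\|_{W^{1,p}(\Omega)}$, so that $\|u\|_{L^\infty(\Omega)}^{p-1}\|\nabla u\|_{L^p(\Omega)}\leq CK_k(\Omega)^{p-1}\lam_{k,\star}$ in view of the normalization; finally the a priori bound \eqref{cota.lam1} together with its analogue for $\lam_2$ drawn from \eqref{crece} controls $\max\{\lam_{k,\ve},\lam_{k,0}\}$ by a power of $|\partial\Omega|$. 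Substituting these three ingredients into the previous display yields the desired $\ve^{(p-1)/p-\tau}$ rate with a prefactor in which $K_k(\Omega)^{p-1}$ is precisely the maximum appearing in the definition of $C(\Omega)$ (the cases $k=1$ and $k=2$ producing slightly different maxima, as already reflected in the two $K_k$ of Corollary~\ref{cor.acotada}). The main obstacle is not conceptual but combinatorial: each of the three ingredients contributes its own powers of $|\partial\Omega|$ and of $C_{\text{Tr}}(\Omega)$, and careful bookkeeping is required to collect them into the single clean expression $C(\Omega)$; a minor but essential conceptual point is that the admissibility set $\mathcal{A}$ is $\rho$-independent, which is what legitimizes the cross-testing for $k=2$.
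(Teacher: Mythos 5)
Your proposal is correct and follows essentially the same route as the paper: for the qualitative limit you use the minimax formula together with weak* convergence tested on almost-optimal symmetric compact sets, and for the quantitative rate you cross-test the first/second eigenfunctions, control the oscillatory boundary integral via Theorem~\ref{teo.osci} and the $L^\infty$ bound of Corollary~\ref{cor.acotada}, and bound the eigenvalues by powers of $|\partial\Omega|$ via \eqref{cota.lam1} and \eqref{crece}. Your normalization $\int_{\partial\Omega}\rho_\ve|u_{k,\ve}|^p\,d\HH=1$ and the resulting compact display for $|\lam_{k,\ve}-\lam_{k,0}|$ are just a slightly streamlined way of recording the same chain of inequalities that the paper writes out as \eqref{rel.1}, \eqref{rel.2} and \eqref{cota.final}.
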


\begin{proof}
Let us start by proving \eqref{converge}. Let $\lam_{k,\ve}$ be the $k-$th variational eigenvalue of $(P_{\rho_\ve})$ with corresponding eigenfunction $u_{k,\ve}\in W^{1,p}(\Omega)$. Similarly, let $\lam_{k,0}$ and $u_{k,0}\in W^{1,p}(\Omega)$ be the $k-$th variational eigenpair of $(P_{\rho_0})$. 

The proof of \eqref{converge} follows by the weak* convergence of the sequence $\rho_\ve$ as $\ve\to0$ together with the variational characterization of the eigenvalues. First, since $\rho_\ve\cde \rho_0$ weakly* in $L^\infty(\partial\Omega)$ as $\ve\to 0$, and $\Omega$ has Lipschitz boundary it follows that
$$
\lim_{\ve\to 0}\int_{\partial\Omega} (\rho_\ve -\rho_0)|u|^p\,dx = 0 \quad \text{ for all } u\in W^{1,p}(\Omega), 
$$
from where
\begin{equation*}
\frac{\int_{\partial\Omega} \rho_\ve |u|^p \,d\HH}{ \int_{\partial\Omega} \rho_0 |u|^p \,d\HH } =  \frac{\int_{\partial\Omega} \rho_0 |u|^p \,d\HH}{ \int_{\partial\Omega} \rho_\ve |u|^p \,d\HH } = 1 + o(1).
\end{equation*}
Therefore, taking $C_{k,\delta}\in \mathcal{C}_k$ such that 
$$
\lam_{k,0}=\sup_{u\in C_{k,\delta}} \frac{\|u\|_{W^{1,p}(\Omega)}^p}{\int_{\partial\Omega} \rho_0 |u|^p\,d\HH} + o(\delta)
$$
and testing with $C_{k,\delta}$ in the characterization of $\lam_{k,\ve}$ we get
\begin{align*}
\lam_{k,\ve}\leq \sup_{u\in C_{k,\delta}} \frac{\|u\|_{W^{1,p}(\Omega)}^p}{\int_{\partial\Omega} \rho_0 |u|^p\,d\HH} \frac{\int_{\partial\Omega} \rho_0 |u|^p\,d\HH}{\int_{\partial\Omega} \rho_\ve |u|^p\,d\HH}  = (\lam_{k,\ve}+o(\delta))(1+o(1)).
\end{align*}
Letting $\delta\to 0$ and $\ve\to 0$ we obtain that $\lim_{\ve\to 0} \lam_{k,\ve} \leq \lam_{k,0}$. Interchanging the roles of $\lam_\ve$ and $\lam_0$ it follows that $\lim_{\ve\to 0} \lam_{k,\ve} \geq \lam_{k,0}$, and then \eqref{converge} follows.
 
\medskip

Let us prove now the rates of convergence for the first two eigenvalues.

Let $\lam_{\ve}$ be the first eigenvalue of $(P_{\rho_\ve})$ with corresponding eigenfunction $u_{\ve}\in W^{1,p}(\Omega)$. Similarly, let $\lam_{0}$ and $u_{0}\in W^{1,p}(\Omega)$ be the first eigenpair of $(P_{\rho_0})$.
 
{\bf Step 1.}

First, observe that Theorem \ref{lema.osc} together with the Trace Theorem gives that
$$
\left|\int_{\partial\Omega} (\rho_0 - \rho_\ve)|u_\ve|^p \,d\HH \right| \leq 
  C \ve^{\frac{p-1}{p}-\tau}  \left( \lam_1^{-1} \|u_\ve\|_{W^{1,p}(\Omega)}^p  +  \|u_\ve\|^{p-1}_{L^\infty(\Omega)} \|u_\ve\|_{W^{1,p}(\Omega)} \right),
$$
where $C=C_1 C_\tau$ is a positive constant depending of $p$, $n$, $\tau$, $\rho_\pm$ and $\Omega$.

The previous relation can be bounded by using Corollary \ref{cor.acotada} as
\begin{equation} \label{eqa.1}
\left|\int_{\partial\Omega} (\rho_0 - \rho_\ve)|u_\ve|^p \,d\HH \right| \leq 
 C  \mathcal{K}  \ve^{\frac{p-1}{p}-\tau} \|u\|_{W^{1,p}(\Omega)}^p,
\end{equation}
where, being $K_1$ the constant of Corollary \ref{cor.acotada}, we denote
$$
\mathcal{K}(\Omega)=\lam_1^{-1} + K_1(\Omega)^{p-1}.
$$

From \eqref{eqa.1}, \eqref{cond.rho} and the variational characterization of $\lam_\ve$ it is deduced that
\begin{align} \label{rel.1}
\begin{split}
\frac{\int_{\partial\Omega}   \rho_\ve |u_\ve|^p \,d\HH }{\int_{\partial\Omega} \rho_0 |u_\ve|^p \,d\HH} &\leq 
1+    C  \mathcal{K}  \ve^{\frac{p-1}{p}-\tau} \frac{\|u_\ve\|_{W^{1,p}(\Omega)}^p}{\int_{\partial\Omega} \rho_0 |u_\ve|^p \,d\HH}\\
&\leq 
1+   C  \mathcal{K}  \ve^{\frac{p-1}{p}-\tau} \frac{\rho_+}{\rho_-} \frac{\|u_\ve\|_{W^{1,p}(\Omega)}^p}{\int_{\partial\Omega}   \rho_\ve |u_\ve|^p \,d\HH}\\
&\leq 
1+    C  \mathcal{K}  \ve^{\frac{p-1}{p}-\tau} \lam_\ve.
\end{split}
\end{align}
With similar reasoning it is deduced that
\begin{align} \label{rel.2}
\begin{split}
\frac{\int_{\partial\Omega} \rho_0 |u_0|^p \,d\HH }{\int_{\partial\Omega}  \rho_\ve |u_0|^p \,d\HH} \leq 
1+   C  \mathcal{K}  \ve^{\frac{p-1}{p}-\tau} \lam_0.
\end{split}
\end{align}

{\bf Step 2.}
In light of the variational characterization of $\lam_\ve$, testing with $u_0$ and using \eqref{rel.2} we obtain
\begin{align*}
\lam_\ve \leq \frac{\| u_0 \|_{W^{1,p}(\Omega)}^p}{\int_{\partial\Omega} \rho_\ve |u_0|^p\,d\HH}&= 
\frac{\| u_0 \|_{W^{1,p}(\Omega)}^p}{\int_{\partial\Omega} \rho_0 |u_0|^p\,d\HH} \frac{\int_{\partial\Omega} \rho_0 |u_0|^p \,d\HH }{\int_{\partial\Omega}  \rho_\ve |u_0|^p \,d\HH}\\
&\leq \lam_0\left( 1+    C  \mathcal{K}  \ve^{\frac{p-1}{p}-\tau}   \lam_0\right)
\end{align*}
from where
$$
\lam_\ve - \lam_0 \leq  C  \mathcal{K}  \ve^{\frac{p-1}{p}-\tau}   \lam_0^2.
$$
Similarly, using the variational characterization of $\lam_0$, testing with $u_\ve$ and invoking \eqref{rel.1} it is obtained that
$$
\lam_0 - \lam_\ve \leq   C  \mathcal{K} \ve^{\frac{p-1}{p}-\tau}   \lam_\ve^2.
$$

{\bf Step 3.}
From the last two relations describing the difference between $\lam_\ve$ and $\lam_0$ we finally get 
\begin{equation} \label{cota.final}
|\lam_\ve-\lam_0|\leq C   \mathcal{K} \ve^{\frac{p-1}{p}-\tau}   \max\{    \lam_\ve^2, \lam_0^2 \}.
\end{equation}
Observe that from \eqref{cond.rho}, $\max\{ \lam_\ve,\lam_0\}\leq \rho_-^{-1}\lam_1$. Then the desired bound follows from \eqref{cota.final} taking into account the definition of $\mathcal{K}$, $K_1$ and the bound \eqref{cota.lam1} for $\lam_1$.

The same analysis can be done to obtain a bound for the second variational eigenvalues of \eqref{problems} taking into account the variational characterization \eqref{autov.st} of these eigenvalues, the definition of the constant $K_2$ of Corollary \ref{cor.acotada} and expression \eqref{crece}.
\end{proof}

Next, we provide for the proof of the convergence rates for eigenvalues of Steklov eigenvalue of the Laplacian.

\begin{thm}\label{teo.p2}
Let $\{\rho_\ve\}_{\ve>0}$ be a sequence of $Q$-periodic functions satisfying \eqref{cond.rho} such that $\rho_\ve \cde \rho_0$ weakly* in $L^\infty(\partial\Omega)$. Denote by $\lam_{k,\ve}$ and $\lam_{k,0}$ to the $k-$th eigenvalue of \eqref{problems} with $p=2$, respectively. Then
$$
|\lam_{k,0}-\lam_{k,\ve}| \leq  c\cdot C(\Omega) \ve^{\frac12-\tau}  k^{\frac{2}{n-1}} k^{2+\frac12 \frac{n}{n-1}}  ,
$$
where $c$ a positive constant depending only of $\tau$, $n$ and $\rho_\pm$ and
$$
C(\Omega)=C_1\max\{1, C_{Tr}(\Omega)^n\} \max\{|\partial\Omega|^{-1}, |\partial\Omega| \}^\frac{1}{2(n-1)}
$$
with $C_1$ the constant of Theorem \ref{teo.osci}.

\end{thm}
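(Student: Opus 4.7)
The strategy follows the blueprint of Theorem \ref{main}, but valid for all $k\in\N$ at once, exploiting the linear structure of the case $p=2$ to compare eigenvalues via the Courant--Fischer min-max characterization on $k$-dimensional subspaces of test functions.

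Let $\{v_{j,0}\}_{j=1}^k$ denote the first $k$ eigenfunctions of $(P_{\rho_0})$, normalized so that $\int_{\partial\Omega}\rho_0 v_{i,0}v_{j,0}\,d\HH = \delta_{ij}$, and let $V_{k,0}$ be their span. For a generic $u=\sum_{j=1}^k c_j v_{j,0}$ with $\int_{\partial\Omega}\rho_0 |u|^2\,d\HH = \sum c_j^2 = 1$, orthogonality yields $\|u\|_{W^{1,2}(\Omega)}^2 = \sum \lam_{j,0}c_j^2 \leq \lam_{k,0}$ and $\|u\|_{L^2(\partial\Omega)}^2 \leq \rho_-^{-1}$, while the triangle inequality combined with Theorem \ref{teo.linfty} applied to each $v_{j,0}$ gives
$$
\|u\|_{L^\infty(\Omega)} \leq \sum_{j=1}^k |c_j|\,\|v_{j,0}\|_{L^\infty(\Omega)} \leq C\,k\,K(\lam_{k,0},\Omega)\,\lam_{k,0}^{1/2},
$$
using monotonicity of $K(\cdot,\Omega)$ in its first argument. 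Plugging these into Theorem \ref{teo.osci} (with $p=2$, so that the exponent is $1/p'-\tau=1/2-\tau$) produces a uniform bound of the form
$$
\sup_{u\in V_{k,0},\ \int\rho_0 |u|^2 = 1} \left|\int_{\partial\Omega}(\rho_0 - \rho_\ve)|u|^2 \,d\HH\right| \leq C\,\ve^{1/2-\tau}\,k\,\lam_{k,0}^{(n+2)/2}.
$$

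I would then apply the min-max version of \eqref{caracp2} by testing $\lam_{k,\ve}$ against $V_{k,0}$: decomposing $\int\rho_\ve |u|^2 = \int\rho_0 |u|^2 + \int(\rho_\ve-\rho_0)|u|^2$ and using the trivial lower bound $\int\rho_\ve |u|^2 \geq (\rho_-/\rho_+)\int\rho_0 |u|^2$, the oscillatory error converts into $\lam_{k,\ve} \leq \lam_{k,0}\bigl(1+C\ve^{1/2-\tau} k\,\lam_{k,0}^{(n+2)/2}\bigr)$. The reverse inequality follows symmetrically by testing $\lam_{k,0}$ against the span of the first $k$ eigenfunctions of $(P_{\rho_\ve})$; uniformity in $\ve$ is granted because $\lam_{k,\ve}\leq \rho_-^{-1}\lam_k$ by \eqref{cond.rho}, so Theorem \ref{teo.linfty} delivers $\ve$-independent $L^\infty$ bounds. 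The final step is to substitute the Weyl-type asymptotics \eqref{crece} to turn every power of $\lam_{k,0}$ into a power of $k$, after which all constants depending only on $n$, $p=2$, $\rho_\pm$, $\tau$, and $\Omega$ can be collected into $c\cdot C(\Omega)$, matching the stated exponent of $k$.

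The main technical obstacle is propagating the $L^\infty$ bound of Theorem \ref{teo.linfty} from a single eigenfunction to an arbitrary element of $V_{k,0}$ without losing too much in $k$: the naive triangle inequality introduces a factor $k$, and each $\|v_{j,0}\|_{L^\infty}$ already grows polynomially in $\lam_j$ (hence in $k^{1/(n-1)}$). Multiplied by $\|\nabla u\|_{L^2}\leq \lam_{k,0}^{1/2}$ from the gradient control in Theorem \ref{teo.osci} and by the extra $\lam_{k,0}$ coming out of the Rayleigh quotient argument (exactly as in Step~2 of the proof of Theorem \ref{main}), one assembles the advertised $k$-polynomial factor. A subsidiary concern is the two-sidedness of the comparison, which requires all intermediate bounds to be uniform in $\ve$; this is ensured by the uniform ellipticity \eqref{cond.rho} together with monotonicity of $K(\cdot,\Omega)$ in the eigenvalue argument.
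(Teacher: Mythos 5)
Your proposal follows the same blueprint as the paper's proof---test the Courant--Fischer min-max for $\lam_{k,\ve}$ against $V_{k,0}=\mathrm{span}\{u_{1,0},\dots,u_{k,0}\}$, control the oscillatory boundary integral by combining Theorem~\ref{teo.osci} with the $L^\infty$ bound of Theorem~\ref{teo.linfty} applied to each eigenfunction and the triangle inequality, then convert powers of $\lam_{k,0}$ to powers of $k$ via \eqref{crece}---but your bookkeeping is cleaner and in fact sharper. You normalize $\int_{\partial\Omega}\rho_0 u^2\,d\HH=1$ on $V_{k,0}$ and use $\int_{\partial\Omega}\rho_\ve u^2\,d\HH\geq\rho_-/\rho_+$ directly, so the only eigenvalue factor appearing in the Rayleigh quotient is $\|u\|_{W^{1,2}}^2\leq\lam_{k,0}$. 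The paper instead builds one specific $\varphi\in V_{k,0}$ which is $\rho_\ve$-orthogonal to the first $k-1$ eigenfunctions of $(P_{\rho_\ve})$ and then, in its Step~4, bounds $\|\varphi\|_{W^{1,2}}^2/\int\rho_\ve\varphi^2$ by diagonalizing $\int\rho_\ve\varphi^2$ in the $u_{j,0}$ basis---a step that quietly assumes $\rho_\ve$-orthogonality of the $u_{j,0}$ (they are only $\rho_0$-orthogonal) and in any case costs an extra factor $k\lam_{k,0}$. Your normalization sidesteps both issues, so the exponent of $k$ you obtain is $1+\tfrac{n+4}{2(n-1)}=\tfrac{3n+2}{2(n-1)}$ (or $\tfrac12+\tfrac{n+4}{2(n-1)}=\tfrac{2n+3}{2(n-1)}$ if you use $\sum|c_j|\leq\sqrt k$ rather than $k$), strictly smaller than the stated $2+\tfrac{n}{2(n-1)}+\tfrac{2}{n-1}=\tfrac{5n}{2(n-1)}$; this proves the theorem a fortiori, but do not claim to ``match the advertised exponent''---say explicitly that you obtain a stronger bound. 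Two small points to record when writing this up: the $u_{j,0}$ must be chosen simultaneously $\rho_0$-orthogonal and $W^{1,2}(\Omega)$-orthogonal, which follows from the weak form \eqref{aut} and is what justifies $\|u\|_{W^{1,2}}^2=\sum_j\lam_{j,0}c_j^2$; and Theorem~\ref{teo.osci} applies to any $u\in V_{k,0}$ because $u\in W^{1,2}(\Omega)\cap L^\infty(\Omega)$ by Theorem~\ref{teo.linfty} and the triangle inequality, not just to single eigenfunctions.
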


\begin{proof}
For each $k\in\N$ denote by $u_{k,\ve}\in W^{1,2}(\Omega)$ and  $u_{k,0}\in W^{1,2}(\Omega)$ the eigenfunctions corresponding to $\lam_{k,\ve}$ and $\lam_{k,0}$, respectively.

{\bf Step 1}. Fixed $k\in \N$ and $\ve\in(0,1)$, we define  the test function
$$
\varphi = c_{1,\ve} u_{1,0} + \cdots + c_{k,\ve} u_{k,0} \in W^{1,2}(\Omega)
$$
where the constants $c_{k,\ve}$ are chosen such that
$$
\int_{\partial\Omega} \rho_\ve u_{j,\ve} \varphi \,d\HH = 0 \quad \forall j=1,\ldots, k-1.
$$ 
Then, in light of the variational characterization \eqref{caracp2} and the fact   the sequence of eigenvalues in non-decreasing, we get
\begin{align} \label{ecp2}
\begin{split}
\lam_{k,\ve} &\leq \frac{\displaystyle\int_\Omega |\nabla \varphi|^2 \,dx}{\displaystyle \int_{\partial\Omega} \rho_\ve \varphi^2\,d\HH}
 \leq\frac{ \displaystyle{\sum_{j=1}^k c_{j,\ve}^2 \lam_{j,0} \int_{\partial \Omega} \rho_0 u_{j,0}^2\,d\HH} }{ \displaystyle \int_{\partial\Omega} \rho_\ve \varphi^2 \,d\HH}
\leq 
\lam_{k,0} \frac{\displaystyle \int_{\partial\Omega} \rho_0 \varphi^2 \,d\HH }{\displaystyle \int_{\partial\Omega} \rho_\ve \varphi^2 \,d\HH}.
\end{split}
\end{align}

{\bf Step 2}.
Theorem \ref{lema.osc} and the Trace Theorem  give
$$
\left|\int_{\partial\Omega} (\rho_0 - \rho_\ve)\varphi^2 \,d\HH \right| \leq 
C_1 C_\tau \ve^{\frac12-\tau}  \left( \lam_1^{-1} \|\varphi\|_{W^{1,2}(\Omega)}^2  +  \|\varphi\|_{L^\infty(\Omega)} \|\varphi\|_{W^{1,2}(\Omega)} \right).
$$
By using Theorem \ref{teo.linfty} we can bound
\begin{align*}
\|\varphi\|_{L^\infty(\Omega)} \|\varphi\|_{W^{1,2}(\Omega)} 
&\leq
\left(\sum_{j=0}^k |c_{j,\ve}| \|u_{j,0}\|_{L^\infty(\Omega)} \right)   \|\varphi\|_{W^{1,2}(\Omega)} \\
&\leq
\left(\sum_{j=0}^k  C K(\lam_{j,0},\Omega) |c_{j,\ve}|  \|u_{j,0}\|_{W^{1,2}(\Omega)}  \right)  \|\varphi\|_{W^{1,2}(\Omega)} \\
&\leq
C\max_{1\leq j \leq k}K(\lam_{j,0},\Omega)\cdot  k \|\varphi\|_{W^{1,2}(\Omega)}^2
\end{align*}
from where
\begin{align*}
\frac{\int_{\partial\Omega}   \rho_0 \varphi^2 \,d\HH }{\int_{\partial\Omega} \rho_\ve \varphi^2 \,d\HH} &\leq 
1+    C_1 C_\tau  \ve^{\frac12-\tau} \mathcal{T}(k,\Omega) \frac{\|\varphi\|_{W^{1,2}(\Omega)}^2}{\int_{\partial\Omega} \rho_\ve \varphi^2 \,d\HH}
\end{align*}
where
$$
\mathcal{T}(k,\Omega):= \lam_1^{-1} + k \max_{1\leq j \leq k} K(\lam_{j,0},\Omega),
$$
here $C_\tau$ and $C_1$ are the constants of Theorem \ref{lema.osc}, $\lam_1$ is the first eigenvalue of \eqref{ec.stek} with $p=2$ and
$$
K(\lam_{j,0},\Omega)=\max\left\{1, C_{\text{Tr}}(\Omega))^n , \sqrt{\frac{\lam_{j,0}}{\lam_1}} \right\}   (\lam_{j,0}^\frac12 + |\Omega|^\frac{1}{2n})^{n-1}, \qquad j=1,\ldots,k.
$$

{\bf Step 3}. Estimate of $\mathcal{T}(k,\Omega)$.

First observe that for each $1\leq j \leq k$, in light of the isoperimetric inequality and \eqref{crece}, for positive fixed constants $c_1$ and $c_2$ independent of $k$ and $\Omega$ it holds that  $c_1 (\frac{j}{|\partial\Omega|})^\frac{1}{n-1} \leq  \lam_{j,0} \leq c_2 (\frac{j}{|\partial\Omega|})^\frac{1}{n-1}$, from where
\begin{align*}
(\lam_{j,0}^\frac12 + |\Omega|^\frac{1}{2n})^{n-1}  \leq C j^\frac12 \max\{|\partial\Omega|^{-\frac12}, |\partial\Omega|^\frac12 \}\\
\lam_1^{-1}\leq C |\partial \Omega|^\frac{1}{n-1}, \qquad 
\sqrt{\frac{\lam_{j,0}}{\lam_1}} \leq C j^\frac{1}{2(n-1)}
\end{align*}
giving that
\begin{align*}
K(\lam_{j,0},\Omega)\leq \max\{1, C_{Tr}(\Omega)^n\} \max\{|\partial\Omega|^{-\frac12}, |\partial\Omega|^\frac12 \} j^{\frac12 \frac{n}{n-1}},
\end{align*}
and consequently, since $1\leq j \leq k$ and using \eqref{cota.lam1}, 
$$
\mathcal{T}(k,\Omega)\leq C (1+|\partial\Omega|^\frac{1}{n-1}) \max\{1, C_{Tr}(\Omega)^n\} \max\{|\partial\Omega|^{-\frac12}, |\partial\Omega|^\frac12 \} k^{1+\frac12 \frac{n}{n-1}}.
$$

{\bf Step 4}. Computation of
$
\frac{\|\varphi\|_{W^{1,2}(\Omega)}^2}{\int_{\partial\Omega} \rho_\ve \varphi^2 \,d\HH}.
$

Since $u_{j,0}$ are eigenfunctions corresponding to $\lam_{j,0}$, $1\leq j \leq k$, and $\lam_{j,0}\leq \lam_{k,0}$ for each $1\leq j \leq k$,
\begin{align*}
\frac{\|\varphi\|_{W^{1,2}(\Omega)}^2}{\displaystyle \int_{\partial\Omega} \rho_\ve \varphi^2 \,d\HH} &\leq \frac{\displaystyle\sum_{j=0}^k \int_\Omega |c_{j,\ve}|^2( |\nabla u_{j,0}|^2 + |u_{j,0}|^2) \,dx}{\displaystyle \sum_{j=0}^k \int_{\partial\Omega} \rho_\ve |c_{j,\ve}|^2  |u_{j,0}|^2 \,d\HH }\\
&\leq
\frac{\rho_+}{\rho_-}\sum_{j=0}^k \frac{\displaystyle\int_\Omega  |\nabla u_{j,0}|^2 + |u_{j,0}|^2 \,dx}{\displaystyle  \int_{\partial\Omega} \rho_0  |u_{j,0}|^2 \,d\HH }\\
&\leq \frac{\rho_+}{\rho_-} \sum_{j=0}^k \lam_{j,0} \leq \frac{\rho_+}{\rho_-} k \lam_{k,0} \leq   C |\partial\Omega|^{-\frac{1}{n-1}} k^\frac{n}{n-1}
\end{align*}
where we have used \eqref{cond.rho} and \eqref{cota.lam1}.

{\bf Step 5}. Putting all together.

From Steps 1--4 and the bound of $\lam_{k,0}$ in \eqref{crece}
$$
\lam_{k,\ve}\leq \lam_{k,0} +  c\cdot C(\Omega)  \ve^{\frac12-\tau} k^{2+\frac12 \frac{n}{n-1} + \frac{2}{n-1}}
$$
where $c$ is a constant depending only $\tau$, $n$ and $\rho_\pm$ and
$$
C(\Omega)=C_1 C_\tau(1+|\partial\Omega|^{-\frac{1}{n-1}})\max\{1, C_{\text{Tr}}(\Omega)^n\} \max\{|\partial\Omega|^{-\frac12}, |\partial\Omega|^\frac12 \}.
$$

Interchanging the roles of $\lam_{k,\ve}$ and $\lam_{k,0}$, analogously it is obtained that
$$
\lam_{k,0}\leq \lam_{k,\ve} +    c\cdot C(\Omega) \ve^{\frac12-\tau}  k^{2+\frac12 \frac{n}{n-1} + \frac{2}{n-1}}
$$
from where
$$
|\lam_{k,0}-\lam_{k,\ve}| \leq  c\cdot C(\Omega) \ve^{\frac12-\tau}  k^{2+\frac12 \frac{n}{n-1} + \frac{2}{n-1}}
$$
and the proof finishes.
\end{proof}

\begin{conj} \label{conjetura}
In light of the results obtained for Dirichlet/Neumann eigenvalues, we conjecture that the bound of Theorem \ref{teo.p2} should be enhanced at least up to $C(\Omega) \sqrt{\ve} k^\frac{2}{n-1}$.
\end{conj}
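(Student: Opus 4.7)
My strategy would attack the two deficiencies of Theorem \ref{teo.p2} separately: the $\tau$-loss in the power of $\ve$, and the polynomial overhead $k^{2+\frac12 \frac{n}{n-1}}$ relative to the conjectured clean factor $k^{2/(n-1)} \asymp \lam_{k,0}^2$ (which is exactly the $\lam^2$ dependence that already appears in \eqref{cota.final} from the proof of Theorem \ref{main}).

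For the $\tau$-loss, the plan is to revisit Theorem \ref{teo.osci}. When $p=2$ the relevant inequality is $\|\nabla v_\ve\|_{L^2(\Omega)} \leq C\sqrt{\ve}$ for the auxiliary Neumann corrector, and this is the natural scaling: an $O(1)$ oscillation at scale $\ve$ should produce a corrector of $H^1$-norm of order $\sqrt{\ve}$ without any $\ve^{-\tau}$ defect. The standard route would be to flatten $\partial\Omega$ locally, expand $\rho-\rho_0$ in Fourier series in the tangential variable, and solve the corresponding half-space Neumann problem via Poisson kernels; the resulting Plancherel identity yields the sharp $\sqrt{\ve}$ bound, as carried out in the Dirichlet/Neumann literature \cite{KLS,MV,SV}. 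Replacing \eqref{cota.grad} by this sharper estimate removes the $\tau$ throughout Steps 1--2 of the proof of Theorem \ref{teo.p2}.

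The more substantial work concerns the $k$-dependence. The current bottleneck is the bound on $\|\varphi\|_{L^\infty(\Omega)}$ for the test function $\varphi=\sum_{j\leq k} c_{j,\ve} u_{j,0}$ in Step 2. The proof estimates it term by term, losing one factor of $k$ from the triangle inequality and a further $k^{\frac12 \frac{n}{n-1}}$ from Theorem \ref{teo.linfty} applied to the extremal eigenfunction. I would instead prove a Sogge-type spectral-projection estimate: for any $\varphi$ in the span of eigenfunctions with $\lam_{j,0}\leq \Lambda$, establish
\begin{equation*}
\|\varphi\|_{L^\infty(\Omega)} \leq C \Lambda^{\alpha_n} \|\varphi\|_{L^2(\partial \Omega)},
\end{equation*}
with $\alpha_n$ depending only on $n$ and on $\Omega$. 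Combined with $\Lambda = \lam_{k,0} \asymp k^{1/(n-1)}$, the Weyl asymptotics \eqref{crece}, and the Rayleigh-ratio computation already obtained in Step 4 of Theorem \ref{teo.p2}, this collapses the quantity $\mathcal{T}(k,\Omega)$ to a fixed power of $\lam_{k,0}$. Re-running the five-step argument then produces $\sqrt{\ve}\,\lam_{k,0}^2 \asymp \sqrt{\ve}\,k^{2/(n-1)}$, matching the conjecture.

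The principal obstacle will be supplying the Sogge-type spectral-projection bound for the Steklov problem on a merely Lipschitz convex $\Omega$. On smooth domains it is known via Hörmander microlocal analysis of the Dirichlet-to-Neumann operator, but in the Lipschitz setting it is largely open at this level of generality, and may force either an additional $C^{1,\alpha}$ regularity hypothesis on $\partial\Omega$ or a restricted geometric class (e.g.\ strictly convex $C^\infty$ boundary) where propagation of singularities for the half-wave group generated by the DtN operator is well-controlled. A secondary, more technical, hurdle is to check that the $L^2(\partial\Omega)$-norm in the projection estimate can be replaced by the weighted $\rho_\ve$-norm without degrading the gain, which should follow directly from \eqref{cond.rho}. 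Absent the microlocal input, the pure Moser-iteration bound of Theorem \ref{teo.linfty} appears intrinsically too crude to reach $k^{2/(n-1)}$, and this is precisely why the conjecture is left open.
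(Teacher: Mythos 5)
This statement is a \emph{conjecture}: the paper explicitly leaves it as an open problem, so there is no proof in the paper to compare against. Your submission is therefore not a proof at all but a research roadmap, and you are commendably candid about that in your final paragraph.

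As a roadmap, your diagnosis of the two sources of loss in Theorem \ref{teo.p2} is accurate. The $\ve^{-\tau}$ defect does indeed trace back to \eqref{cota.grad}, i.e., to the gradient estimate from \cite{ASS} used inside Theorem \ref{teo.osci}, and the polynomial overhead in $k$ does indeed come from applying Theorem \ref{teo.linfty} term by term to the test function $\varphi=\sum_{j\le k} c_{j,\ve}u_{j,0}$. Your proposed replacement of the latter by a Sogge-type spectral projection bound for the Steklov (Dirichlet-to-Neumann) spectrum is the natural move if one wants to beat Moser iteration, and you correctly identify that the bottleneck is proving such a bound on a merely Lipschitz convex domain, where the microlocal machinery (half-wave propagator for the DtN operator, Hörmander/Sogge arguments) is not available. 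That gap is genuine and you name it as such, so there is nothing to ``catch'' here: the proposal simply reduces the conjecture to a different open problem, which is a legitimate and informative thing to do but is not a proof.

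Two caveats you should add to make the sketch honest about its own bookkeeping. First, even granting a sharp $\sqrt{\ve}$ replacement for \eqref{cota.grad} and a Sogge bound that renders $\mathcal{T}(k,\Omega)$ bounded by a fixed power of $\lam_{k,0}$, Steps~4--5 of the paper's proof still contribute a factor $k\lam_{k,0}\cdot\lam_{k,0}$, since the Rayleigh-quotient computation of Step~4 is done rather crudely via $\sum_{j\le k}\lam_{j,0}\le k\lam_{k,0}$; one must also redo that step (e.g.\ as a weighted average so that it yields $\lam_{k,0}$ alone) or the extra factor $k$ survives and you cannot reach $k^{2/(n-1)}$. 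Second, your claim that ``flattening the boundary and expanding in Fourier series'' removes the $\tau$-loss in the corrector estimate is a substantial assertion in itself: the quoted $\ve^{1/q-\tau}$ bound of \cite{ASS} uses the convexity of $\Omega$ and stationary phase on the Gauss map, and it is not obvious that the endpoint $\tau=0$ holds for a general convex Lipschitz boundary where the curvature is only a measure; for $C^2$ strictly convex boundaries this is plausible, but then the conjecture as stated (Lipschitz $\Omega$) would need to be weakened. Both points are consistent with the paper leaving this open.
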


\section{Dancer-{F}u{\v{c}}{\'{\i}}k spectrum} \label{sec6}
Given functions $a,b$ satisfying \eqref{cond.rho}, the Dancer-{F}u{\v{c}}{\'{\i}}k spectrum of the Steklov problem on $W^{1,p}(\Omega)$ is defined as the set $\Sigma=\Sigma(a,b)$ of those $(\alpha,\beta)\in \R^2$ such that
\begin{align} \label{eq.fucik}    \tag{$\mathcal{F}_{a,b}$}
\begin{cases}
-\Delta_p u + u =0 &\text{ in }\Omega,\\
|\nabla u|^{p-2} \frac{\partial u}{\partial \nu}=   \alpha a(x)(u^+)^{p-1}- \beta b(x) (u^-)^{p-1} &\text{ on } \partial\Omega
\end{cases}
\end{align}
has a nontrivial solution. We recall that $\lam_1(r)$ denotes the  the principal eigenvalue of \eqref{ec.stek} with with weight $\rho=r$. The Dancer-{F}u{\v{c}}{\'{\i}}k spectrum clearly contains the trivial lines $\mathcal{C}_{a,b}^{0,+}:=\{\lam_1(a)\}\times \R$ and 	$\mathcal{C}_{a,b}^{0,-}=\R\times\{\lam_1(b)\}$. It is proved in \cite{Anane} that there exists a first nontrivial curve $\mathcal{C}_{a,b}\subset \Sigma$ such that $\mathcal{C}_{a,b}^{0,+}$ and $\mathcal{C}_{a,b}^{0,-}$ are isolated in $\Sigma_{a,b}$ in the sense that there is not $(\alpha,\beta)$ solving \eqref{eq.fucik} between the trivial lines and $\mathcal{C}_{a,b}$.

Following the construction of \cite{CTV}, it can be deduced that the first nontrivial curve is characterized as
\begin{equation} \label{characte.fu}
\mathcal{C}_{a,b}(s):=\{ (\alpha(s), \beta(s))\colon s>0\}
\end{equation}
where $\alpha(s)=s^{-1}c(s)$, $\beta(s)=c(s)$, being
$$
c(s):=\inf_{(\omega_+,\omega_-) \in \mathcal{P}_2} \max\{s\lam_1(a,\omega_-), \lam_1(b,\omega_+)\}
$$
and
$$
\mathcal{P}_2=\{ (\omega_-,\omega_+)\subset \Omega \colon \omega_i \text{ is open and connected}, \omega_-\cap\omega_+ =\emptyset  \}.
$$
For every $s>0$ there exists $u\in W^{1,p}(\Omega)$ such that $(\{u^+>0\}, \{u^->0\})$ achieves $c(s)$. See \cite{Anane} for different characterizations of this curve.

The function $\alpha(s)$   is proved to be strictly decreasing and $\beta(s)$ strictly increasing; indeed, one has that $\alpha(s)\to \lam_1(a)$ as $s\to\infty$ and $\beta(s)\to\infty$ as $s\to\infty$. As a consequence, $\mathcal{C}_{a,b}$ is a strictly decreasing curve in $\R^2$ which is asymptotic to the lines $\mathcal{C}_{a,b}^{0,+}$ as $s\to \infty$ and to $\mathcal{C}_{a,b}^{0,-}$ as $s\to 0$.

Given two $Q-$periodic function  $a,b$ satisfying condition \eqref{cond.rho} we define $a_\ve(x):=a(\tfrac{x}{\ve})$ and $b_\ve(x):=b(\tfrac{x}{\ve})$. For each fixed $\ve>0$ we consider the asymmetric Steklov problem $(\mathcal{F}_{a_\ve,b_\ve})$ whose first nontrivial curve is given by
$$
\mathcal{C}_{a_\ve,b_\ve}(s)=\{(\alpha_\ve(s),\beta_\varepsilon(s)),\, s\in\R^+\} = \{(s^{-1}c_\ve(s) , c_\ve(s)),\, s\in \R^+\}\\
$$
where $c_\ve(s):=\inf_{(\omega_+,\omega_-)\in \mathcal{P}_2} \max\{s\lam_1(a_\ve,\omega_-),\lam_1(b_\ve,\omega_+)\}$. 

As $\ve\to 0$ we prove that $\mathcal{C}_{a_\ve,b_\ve}$ converges to a limit curve $\mathcal{C}_{a_0,b_0}$ given by
$$
\mathcal{C}_{a_0,b_0}(s):=\{(\alpha_0(s),\beta_0(s)),\, s\in\R^+\} = \{(s^{-1}c_0(s) , c_0(s)),\, s\in \R^+\}
$$
where $c_0(s)= \max\{s\lam_1(a_0,\omega_-),\lam_1(b_0,\omega_+)\}$. This limit curve is the first nontrivial curve of problem $(\mathcal{F}_{a_0,b_0})$, where $a_0$ and $b_0$ are the weak* limit in $L^\infty(\partial\Omega)$ of $a_\ve$ and $b_\ve$ as $\ve\to 0$, respectively, where 
$$
a_0=\int_{\mathbb{T}^n} a(x)\,dx, \quad b_0=\int_{\mathbb{T}^n} b(x)\,dx
$$
being $\mathbb{T}^n$ the unit torus of $\R^n$.

\begin{thm} \label{teo2}
With the previous notation in force, let $\mathcal{C}_{a_\ve,b_\ve}$ and $\mathcal{C}_{a_0,b_0}$ be the first non-trivial curves of problems $(\mathcal{F}_{a_\ve,b_\ve})$ and $(\mathcal{F}_{a_0,b_0})$, respectively. Then $\mathcal{C}_{a_\ve,b_\ve}\to \mathcal{C}_{a_0,b_0}$ as $\ve\to 0$ in the sense that
\begin{align*}
	|\beta_\ve(s)-\beta_0(s)|&=|c_\ve(s)-c_0(s)|\leq
	 C  \ve^{\frac{p-1}{p}-\tau} \max\{1,s  \},\\
	|\alpha_\ve(s)-\alpha_0(s)|&=s^{-1}|c_\ve(s)-c_0(s)|\leq
	 C \ve^{\frac{p-1}{p}-\tau} \max\{1,s^{-1}\}
\end{align*}
for each $\tau>0$, where $C$ is a positive constant depending of $n$, $p$, $\tau$, $\rho_\pm$ and $\Omega$.
\end{thm}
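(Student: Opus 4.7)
The plan is to propagate the eigenvalue estimates of Theorem \ref{main} through the inf-max characterization \eqref{characte.fu}. Because $\beta_\ve(s)-\beta_0(s) = c_\ve(s)-c_0(s)$ and $\alpha_\ve(s)-\alpha_0(s) = s^{-1}(c_\ve(s)-c_0(s))$, the whole theorem reduces to a single bound
\[
|c_\ve(s)-c_0(s)|\le C\max\{1,s\}\,\ve^{\frac{p-1}{p}-\tau},
\]
after which the claimed estimates on $|\beta_\ve-\beta_0|$ and $|\alpha_\ve-\alpha_0|$ are immediate.

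To establish this bound, let $(\omega_+^0,\omega_-^0)\in\mathcal{P}_2$ be a pair attaining $c_0(s)$, whose existence is recalled in the paragraph following \eqref{characte.fu}. Testing it in the infimum defining $c_\ve(s)$ gives
\[
c_\ve(s)\le \max\bigl\{s\lam_1(a_\ve,\omega_-^0),\,\lam_1(b_\ve,\omega_+^0)\bigr\}.
\]
I would then apply Theorem \ref{main} (in the form extending from the full Steklov problem to the mixed Dirichlet/Steklov problem on the subdomains $\omega_\pm^0$; the three-step proof goes through verbatim) to the weight pairs $(a_\ve,a_0)$ on $\omega_-^0$ and $(b_\ve,b_0)$ on $\omega_+^0$. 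Using the elementary inequality $\max\{x+u,y+v\}\le\max\{x,y\}+\max\{u,v\}$ for $u,v\ge 0$ this yields
\[
c_\ve(s)\le c_0(s)+K\max\{1,s\}\,\ve^{\frac{p-1}{p}-\tau},
\]
with $K$ a constant depending on $\omega_\pm^0$, $n$, $p$, $\rho_\pm$, $\tau$. Interchanging the roles of $0$ and $\ve$, with a minimizing pair $(\omega_+^\ve,\omega_-^\ve)$ for $c_\ve(s)$, produces the matching lower bound.

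The main obstacle I anticipate is ensuring that the constant $K$ appearing when Theorem \ref{main} is applied on the $\ve$-dependent pair $(\omega_+^\ve,\omega_-^\ve)$ does \emph{not} degenerate as $\ve\to 0$, since the constant $C(\Omega)$ in Theorem \ref{main} depends on $|\partial\Omega|$ and on $C_{\text{Tr}}(\Omega)$. To handle this I would exploit that the minimizing partition is realized by the nodal set of an eigenfunction $u_\ve\in W^{1,p}(\Omega)$ of \eqref{eq.fucik.intro}, which is uniformly bounded in $W^{1,p}(\Omega)\cap L^\infty(\Omega)$ by Theorem \ref{teo.linfty} together with the trivial energy estimate obtained by testing with $u_\ve$ itself, hence the surface measures and trace constants of $\omega_\pm^\ve$ remain controlled solely by data attached to $\Omega$. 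A cleaner alternative that I would pursue if the subdomain bookkeeping becomes delicate is to bypass subdomain eigenvalues entirely and redo the three-step proof of Theorem \ref{main} directly for the functional $c_\ve(s)$: apply Theorem \ref{teo.osci} separately to $|u^+|^p$ and $|u^-|^p$ of a sign-changing test function obtained from the achiever of $c_0(s)$, and combine via the $\max$-identity above; the factors $\max\{1,s\}$ and $\max\{1,s^{-1}\}$ then emerge naturally from the appearance of $s$ in the $\max$.
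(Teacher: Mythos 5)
Your proposal follows the paper's own proof essentially verbatim: pick a partition $(\omega_+,\omega_-)$ attaining $c_0(s)$, test it in the infimum defining $c_\ve(s)$, invoke Theorem~\ref{main} to compare the sub-domain eigenvalues $\lam_1(a_\ve,\omega_\pm)$ with $\lam_1(a_0,\omega_\pm)$, combine via the elementary $\max$ inequality, and interchange roles to obtain the matching lower bound; the $\alpha$ and $\beta$ estimates then follow immediately from the parametrization. The one place where you go further than the paper is in flagging that the constant produced by Theorem~\ref{main} on a sub-domain $\omega$ a priori depends on $|\partial\omega|$ and $C_{\mathrm{Tr}}(\omega)$ (and hence on $s$, and on $\ve$ when the minimizing partition for $c_\ve(s)$ is used); the paper simply writes $C(\Omega)$ without addressing this, whereas you correctly identify it as a gap and sketch two plausible routes to uniformity (uniform $W^{1,p}\cap L^\infty$ control of the achieving eigenfunction, or applying Theorem~\ref{teo.osci} directly to $|u^\pm|^p$). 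Either way the overall strategy is identical, and your version is the more careful of the two.
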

\begin{proof}
Let $s>0$ be fix. Let $(\omega_+,\omega_-)\in \mathcal{P}_2$ be a partition such that
$$
c_0(s)=\max\{s \lam_1(a_0,\omega_+), \lam_1(b_0,\omega_-)\}.	
$$
Testing with $(\omega_+,\omega_-)$ in the definition of $c_\ve(s)$ we get
\begin{align} \label{eq.c.0}
	\begin{split}
		c_\ve(s)\leq \max\{s\lam_1(a_\ve,\omega_+),\lam_1(b_\ve,\omega_-)\}.
	\end{split}
\end{align}
Now, Theorem \ref{main} allows us to bound $\lam_1(a_\ve,\omega_+)$ and $\lam_1(b_\ve,\omega_-)$ in terms of $\lam_1(a_0,\omega_+)$ and $\lam_1(b_0,\omega_-)$, from where, for each $\tau>0$ we get
\begin{align*}
\begin{split}
		c_\ve(s)&\leq \max\{s\big(\lam_1(a_0 ,\omega_+)+  C(\Omega) \ve^{\frac{p-1}{p}-\tau}
		\big),\lam_1(b_0,\omega_-) +  C(\Omega) \ve^{\frac{p-1}{p}-\tau}    \}\\
		&\leq  \max\{s\lam_1(a_0 ,\omega_+) ,\lam_1(b_0,\omega_-)\}+   C(\Omega) \ve^{\frac{p-1}{p}-\tau}  \max\{s,1\}\\
		&=  c_0(s) +C(\Omega) \ve^{\frac{p-1}{p}-\tau}   \max\{1,s  \}   		
\end{split}		
\end{align*}
where $C(\Omega)$ is a constant depending of $\tau$, $\Omega$, $n$, $p$ and $\rho_\pm$. Therefore we obtain that
$$
		c_\ve(s)\leq c_0(s)+  C(\Omega) \ve^{\frac{p-1}{p}-\tau} \max\{1,s  \}.
$$
Interchanging the roles of $c_\ve(s)$ and $c_0(s)$ we similarly obtain that
\begin{align} \label{eq.c.21}
		c_0(s)\leq c_\ve(s)+  C(\Omega) \ve^{\frac{p-1}{p}-\tau} \max\{1,s  \} 
\end{align}
which allows  to derive that
$$
	|c_\ve(s)-c_0(s)|\leq   C(\Omega) \ve^{\frac{p-1}{p}-\tau} \max\{1,s\} .
$$
Finally, the proof concludes taking in account the last inequality together with the definition of $\alpha_\ve, \alpha_0,  \beta_\ve$ and $\beta_0$.
\end{proof}

\section*{Acknowledgements}
This paper is partially supported by grants UBACyT 20020130100283BA, CONICET PIP 11220150100032CO and ANPCyT PICT 2012-0153.

\end{document}